\newtheorem{theorem}{Theorem}[section]
\newtheorem{corollary}[theorem]{Corollary}
\newtheorem{proposition}[theorem]{Proposition}
\theoremstyle{definition}
\newtheorem{definition}[theorem]{Definition}
\newtheorem{example}[theorem]{Example}
\theoremstyle{remark}
\newtheorem{remark}[theorem]{Remark}
\renewcommand{\d}{\delta}
\newcommand{\g}{\lambda}
\newcommand{\q}{\quad}
\newcommand{\s}{\sigma}
\newcommand{\M}{{\cal M}}
\newcommand{\ty}{\infty}
\newcommand{\e}{\varepsilon}
\newcommand{\ov}[1]{\overline{#1}}
\renewcommand{\O}{\Omega}
\newcommand{\eR}{\mathbb{R}}
\newcommand{\eN}{\mathbb{N}}
\newcommand{\Ze}{\mathbb{Z}}
\newcommand{\Ce}{\mathbb{C}}
\newcommand{\re}{\mathop{\mathrm{Re}}}
\newcommand{\res}{\operatorname{res}}
\newcommand{\I}{\mathbbm{i}}
\newcommand{\E}{\mathrm{e}}
\newcommand{\ovb}[1]{\mkern 1.5mu\overline{\mkern-1.5mu#1\mkern-1.5mu}\mkern 1.5mu}
\newcommand{\unb}[1]{\mkern 1.5mu\underline{\mkern-1.5mu#1\mkern-1.5mu}\mkern 1.5mu}
\newcommand{\di}{\,\mathrm{d}}
\newcommand{\eS}{\mathbb{S}}  
\newcommand{\arccot}{\operatorname{arccot}}
\newcommand{\cal}{\mathcal}
\begin{document}
\nocite{*}


\title{Fractal zeta functions at infinity and the $\phi$-shell Minkowski content}


\author[Goran Radunovi\'c]{Goran Radunovi\'c}
\thanks{The research of Goran Radunovi\'c was supported by Croatian Science Foundation (HRZZ) grant IP-2022-10-9820 and by the Horizon grant 101183111-DSYREKI-HORIZON-MSCA-2023-SE-01}
\address{University of Zagreb, Faculty of Science, Horvatovac 102a, 10000 Zagreb, Croatia}
\email{{\tt goran.radunovic@math.hr} (Goran Radunovi\'c)}

%
%
%

\begin{abstract}
We study fractal properties of unbounded domains with infinite Lebesgue measure via their complex fractal dimensions. These complex dimensions are defined as poles of a suitable defined Lapidus fractal zeta function at infinity and are a generalization of the Minkowski dimension for a special kind of a degenerated relative fractal drums at infinity. It is a natural generalization of a similar approach applied to unbounded domains of finite Lebesgue measures investigated previously by the author. In this case we adapt the definition of Minkowski content and dimension at infinity by introducing the so-called $\phi$-shells where $\phi$ is a real parameter.
We show that the new notion of the upper $\phi$-shell Minkowski dimension is independent on the parameter $\phi$ and well adapted to the fractal zeta function at infinity.
As an application we construct maximally hyperfractal and quasiperiodic domains at infinity of infinite Lebesgue measure.
We also reflect on how the new definition connects with the one-point compactification and the classical fractal properties of the corresponding compactified domain as well as to the notion of surface Minkowski content at infinity.
\end{abstract}

\keywords{distance zeta function, relative fractal drum, complex dimensions, Minkowski content, Minkowski dimension}

\subjclass{11M41, 28A12, 28A75, 28A80, 28B15, 42B20, 44A05, 30D30}

\maketitle

\tableofcontents


\section{Introduction}
The study of fractal properties of unbounded sets in Euclidean space presents unique challenges, particularly when considering their behavior at infinity. 
In order to study sets at infinity having infinite Lebesgue measure we introduce and analyze new geometrical notions: the $\phi$-shell Minkowski content and the corresponding $\phi$-shell Minkowski dimension at infinity.
We will show how these new notions generalize the results of \cite{ra2,ra3} where unbounded sets were required to have finite Lebesgue measure and, especially, from the point of view of the theory of complex dimensions and the related fractal zeta functions at infinity.

As an application of the new notions we demonstrate how the construction of maximally hyperfractal unbounded domains and quasiperiodic unbounded domains can be done even without the assumption of them having finite Lebesgue measure, as opposed to \cite{ra3}
%

In concluding remarks we reflect how the notion of $\phi$-Minkowski content at infinity relates to the classical notion of Minkowski content under the one-point compactification via the stereographic projection and also to the natural notion of surface Minkowski content at infinity.

In this paper we are focused on a special kind of degenerated pairs of subsets (called relative fractal drums, or RFDs, in short \cite{fzf}) of the $N$-dimensional Euclidean space $\mathbb{R}^N$, $(\infty,\Omega)$ where $\infty$ is the point at infinity. As opposed to our previous work on the topic, here we remove the assumption on the set $\Omega$ of having finite Lebesgue measure. Hence we need to modify the definition of the Minkowski dimension at infinity by introducing a parameter $\phi>1$ and focusing on shell-like parts of the parallel body ($t$-neighborhood of infinity intersecting $\Omega$) of ``width'' $\phi$.

 Recall that complex dimensions are a far-reaching generalization of the classical Minkowski dimension for compact subsets of $\eR^N$, defined analytically as poles of suitably defined (Lapidus) fractal zeta functions.
 In the classical setting, relative fractal drums (RFDs) $(A,\Omega)$ where $A\subseteq\eR^N$ is a nonempty subset and $\Omega\subseteq\eR^N$ has finite $N$-dimensional volume generalize the notion of compact subsets of $\mathbb{R}^N$. These have been extensively studied in \cite{LapRaZu4,LapRaZu6,LapRaZu7,fzf}, along with their associated Minkowski dimension, content, and complex dimensions defined via (Lapidus) fractal zeta functions. The distance zeta function of an RFD $(A,\Omega)$ is defined as: \begin{equation}\label{zeta_dist}
\zeta_{A,\Omega}(s):=\int_{\Omega}\mathrm{dist}(x,A)^{s-N}\,\mathrm{d}x,
\end{equation} initially for $s\in\mathbb{C}$ with $\mathrm{Re}(s)$ sufficiently large for guaranteeing absolute convergence, while $\mathrm{dist}(x,A)$ is the Euclidean distance from $x$ to $A$. The higher-dimensional theory of complex dimensions \cite{fzf,LapRaZu2,LapRaZu3,brezish,tabarz} also generalizes the theory of geometric zeta functions for fractal strings developed by Lapidus, van Frankenhuijsen \cite{lapidusfrank12}.
In order to study unbounded sets from this perspective, basic notions of Minkowski content, dimension, and zeta functions at infinity were introduced in \cite{ra,ra2}. 

The ``fractality'' of $(\infty,\Omega)$ stems solely from the unbounded set $\Omega$, in contrast to classical RFDs where it can arises from both $A$ and $\Omega$ but in which case the set $A$ is usually the one being of interest, while the set $\O$ serves for localization purposes and is usually kept metrically associated\footnote{Meaning that $\O$ is a preimage of a subset of $A$ under the metric projection.} to the set $A$.
Note that it was shown in \cite{ra,ra2} that degenerated RFDs $(\infty,\Omega)$ having nontrivial Minkowski dimension exist, hence, complicated sets $\O$ can have a serious impact on the ``fractality'' of the RFD $(A,\O)$ in general.
However, we can use this effect in order to study unbounded domains from a fractal geometric point of view.

Motivation for studying unbounded fractal domains comes from oscillation theory \cite{Dzu,Karp}, automotive engineering \cite{She}, aerodynamics \cite{Dol}, civil engineering \cite{Pou}, mathematical biology \cite{May}, and PDEs in unbounded domains \cite{Maz1,mazja,An,Hur,La,Rab,VoGoLat}. Furthermore, fractal properties of unbounded trajectories in planar vector fields were studied in \cite{razuzup} related to Hopf bifurcation at infinity. 

The paper is structured as follows.
Section \ref{phi_sec} introduces the $\phi$-shell Minkowski content and the corresponding $\phi$-shell Minkowski dimension at infinity.
In case of unbounded domains that have finite Lebesgue measure, we show that these new parametric notions generalize the notion of Minkowski dimension at infinity (introduced in \cite{ra2,ra3}) by obtaining comparison results and studying limiting behavior of the parameter $\phi$. 
The possible range for the $\phi$-shell Minkowski dimension is shown to be $[-\infty, 0]$ while the range for unbounded domains of infinite Lebesgue measure is shown to be $[-N,0]$ filling the dimensional gap observed in \cite{ra2} for unbounded sets of finite Lebesgue measure.
We also study dependence on the parameter $\phi$ and show that the upper $\phi$-shell Minkowski dimension is independent on the choice of $\phi$ while this is not the case for the lower  $\phi$-shell Minkowski dimension.

In Section \ref{phi_zeta} we show that the results about the Lapidus zeta functions of subsets of finite Lebesgue measure at infinity generalize to the case of subsets with infinite Lebesgue measure.
The first main theorem of the paper is the holomorphicity Theorem  \ref{analiticinf_inf} which generalizes \cite[Thm.\ 5]{ra2} \cite[Thm.\ 5]{ra2} by using the $\phi$-shell Minkowski dimension.
The second main Theorem \ref{resdistinf_inf} then shows that under natural assumptions on the unbounded set $\O$, the residue of its Lapidus zeta function at infinity evaluated at its upper $\phi$-shell Minkowski dimension is bounded from above (below) by its upper (lower) $\phi$-shell Minkowski content normalized by an appropriate constant which is a generalization of \cite[Theorem 6]{ra2}.

In Section \ref{sec:quasi} we describe a  construction of interesting nontrivial examples of unbounded domains of infinite Lebesgue measure having maximally hyperfractal and quasiperiodic behavior at infinity which generalize examples from \cite{ra3}.

In Section \ref{surface_sec}, we briefly reflect on how the notion of $\phi$-shell Minkowski content connects to one-point compactification via the spherical Minkowski content and also to the natural notion of surface Minkowski content at infinity.
We obtain some comparison results and comment on open problems.

\section{The $\phi$-shell Min\-kowski content at infinity}\label{phi_sec}

We start by introducing the notion of the parametric $\phi$-shell Minkowski content at infinity which asks for the asymptotic behavior of the volume of the intersection of shells (which have thickens prescribed by the parameter $\phi>0$) and the unbounded set $\O$ as the inner radius of the shell grows to infinity.
More, precisely, for $t>0$, we denote by $B_t(0):=\{x\in\eR^N:|x|<t\}$ the open ball centered at origin of radius $t$.
Next, for any $a,b\in\eR$, with $b>a>0$, we define the $[a,b)$-{\it shell} centered at the origin as $B_{a,b}(0):=B_a(0)^c\cap B_b(0) =\{x\in\eR^N:a\leq|x|<b\}$.

\begin{definition}[$\phi$-shell Minkowski content at infinity]\label{phi_mink_def}
Let $\O\subseteq\eR^N$ be a Lebesgue measurable set,\footnote{Note that here we do not require that $\O$ has finite Lebesgue measure.} $\phi>1$ and $r\in\eR$.
We define the {\em upper $\phi$-shell Minkowski content} of $\O$ {\em at infinity} as
\begin{equation}\label{G_phi_shell}
{\ovb{\M}}_{\phi}^{r}(\ty,\O):=\limsup_{t\to +\ty}\frac{|B_{t,\phi t}(0)\cap\O|}{t^{N+r}}
\end{equation}
and analogously the {\em lower $\phi$-shell Minkowski content} of $\O$ {\em at infinity} as
\begin{equation}\label{D_phi_shell}
{\unb{\M}}_{\phi}^{r}(\ty,\O):=\liminf_{t\to +\ty}\frac{|B_{t,\phi t}(0)\cap\O|}{t^{N+r}}.
\end{equation}
If for some $r\in\eR$ the upper and lower limits above coincide, we call this value the {\em$r$-dimensional $\phi$-shell Minkowski content of $\O$ at infinity} and denote it with ${\mathcal{M}}_\phi^{r}(\ty,\O)$.
Furthermore, we will call the function $t\mapsto|B_{t,\phi t}(0)\cap\O|$ the {\em $\phi$-shell function} of $\O$.
\end{definition}

Recall from \cite{ra,ra2} that for unbounded sets of finite Lebesgue measure we already have a well defined notion of Minkowski content at infinity.
Namely, the {\it upper $r$-dimensional Minkowski content at infinity} is defined as 
\begin{equation}
	{\ovb{\M}}^{r}(\ty,\O):=\limsup_{t\to+\infty} \frac{|B_t(0)^c\cap\O|}{t^{N+r}}
\end{equation}
and similarly for the lower counterpart. 
As usual, then the {\it upper (lower) Minkowski dimension of $\O$ at infinity}, denoted by $\ovb{\dim}_B(\infty,\O)$ ($\unb{\dim}_B(\infty,\O)$), is defined in the standard way - as the critical value of the exponent $r$ for which the value of the upper (lower) Minkowski content at infinity drops from $+\infty$ to 0.
Recall also that $-\infty\leq\unb{\dim}_B(\infty,\O)\leq\ovb{\dim}_B(\infty,\O)\leq -N$ for any unbounded set $\O$ of finite Lebesgue measure; \cite{ra2}.

In the case when $\O\subseteq\eR^N$ is unbounded and of finite Lebesgue measure, the next proposition gives a comparison result between the just introduced $\phi$-shell Minkowski content and the Minkowski content at infinity. 

\begin{proposition}\label{prstenasta}
Let $\O\subseteq\eR^N$ be a Lebesgue measurable set with $|\O|<\ty$. Then, for every $\phi>1$ and $r<-N$ we have
\begin{equation}\label{G_prsten}
{\ovb{\M}}_{\phi}^{r}(\ty,\O)\leq{\ovb{\M}}^{r}(\ty,\O)\leq\frac{1}{1-\phi^{N+r}}\,{\ovb{\M}}_{\phi}^{r}(\ty,\O)
\end{equation}
and
\begin{equation}\label{D_prsten}
\frac{1}{1-\phi^{N+r}}\,{\unb{\M}}_{\phi}^{r}(\ty,\O)\leq{\unb{\M}}^{r}(\ty,\O).
\end{equation}
\end{proposition}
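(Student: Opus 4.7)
The plan is to decompose the exterior set $B_t(0)^c\cap\O$ as a countable disjoint union of consecutive $\phi$-shells $B_{\phi^k t,\phi^{k+1}t}(0)\cap\O$ for $k=0,1,2,\dots$ Since $|\O|<\ty$, continuity of Lebesgue measure from above gives $|B_{\phi^n t}(0)^c\cap\O|\to 0$ as $n\to\ty$, so this decomposition is a genuine identity:
$$|B_t(0)^c\cap\O|=\sum_{k=0}^{\ty}|B_{\phi^k t,\phi^{k+1}t}(0)\cap\O|.$$
Once this identity is in place, the leftmost inequality in~\eqref{G_prsten} is immediate from the inclusion $B_{t,\phi t}(0)\cap\O\st B_t(0)^c\cap\O$: I just divide by $t^{N+r}$ and take $\limsup$ as $t\to+\ty$.

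For the right-hand inequality in~\eqref{G_prsten} I would proceed by a standard $\varepsilon$--$T$ argument. Set $L:={\ovb{\M}}_\phi^{r}(\ty,\O)$; the case $L=+\ty$ is trivial, so assume $L<\ty$. Given $\varepsilon>0$, pick $T>0$ with $|B_{s,\phi s}(0)\cap\O|\le(L+\varepsilon)s^{N+r}$ for all $s>T$. Then for every $t>T$ all the values $\phi^k t$ exceed $T$, so the shell bound applies termwise, and the critical observation is that $r<-N$ forces $\phi^{N+r}\in(0,1)$, which makes the resulting geometric series $\sum_k\phi^{k(N+r)}$ convergent to $1/(1-\phi^{N+r})$. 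This yields
$$\frac{|B_t(0)^c\cap\O|}{t^{N+r}}\le\frac{L+\varepsilon}{1-\phi^{N+r}}.$$
Taking $\limsup$ in $t$ and then $\varepsilon\to 0^+$ delivers the claim.

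The inequality~\eqref{D_prsten} will be obtained by a symmetric argument: with $\ell:={\unb{\M}}_\phi^{r}(\ty,\O)$, I may assume $\ell>0$ (otherwise the bound is trivial since the right-hand side is nonnegative), and for $0<\varepsilon<\ell$ the definition of $\liminf$ produces $T$ with $|B_{s,\phi s}(0)\cap\O|\ge(\ell-\varepsilon)s^{N+r}$ for $s>T$. Non-negativity of the shell measures lets me sum the termwise lower bound across $k\ge 0$ in the decomposition, and the same geometric series computation yields $|B_t(0)^c\cap\O|/t^{N+r}\ge(\ell-\varepsilon)/(1-\phi^{N+r})$ for all $t>T$; taking $\liminf$ in $t$ and $\varepsilon\to 0^+$ closes the argument.

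The only real subtlety is ensuring the shell decomposition is valid, i.e., that the remainder $|B_{\phi^n t}(0)^c\cap\O|$ vanishes as $n\to\ty$; this is precisely where the standing hypothesis $|\O|<\ty$ enters, and it is indispensable (for sets of infinite measure the identity fails and the whole scheme would collapse). Apart from that, the proof is a mechanical combination of an $\varepsilon$--$T$ comparison with a convergent geometric series whose convergence is guaranteed by $r<-N$, so I anticipate no further obstacles.
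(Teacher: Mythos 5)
Your proof is correct and rests on exactly the same core idea as the paper's: decompose the exterior $B_t(0)^c\cap\O$ into the disjoint $\phi$-shells $B_{\phi^k t,\phi^{k+1}t}(0)\cap\O$, bound each shell, and sum the geometric series $\sum_k\phi^{k(N+r)}$, which converges precisely because $r<-N$. The only genuine difference is in how the sum and the limit are interchanged: the paper divides the decomposition identity by $t^{N+r}$ and invokes Fatou's lemma on the counting measure (together with a footnote about passing to sequences), whereas you run a direct $\varepsilon$--$T$ estimate from the definitions of $\limsup$ and $\liminf$. Your variant is a touch more elementary and arguably more watertight: the $\limsup$ direction $\limsup_t\sum_n\le\sum_n\limsup_t$ is not a consequence of plain Fatou but of the reverse Fatou lemma, which needs a summable dominating sequence (here supplied implicitly by $\phi^{n(N+r)}$ and an eventual uniform bound on the shell ratios); your explicit $\varepsilon$--$T$ computation builds that domination in by hand and thus avoids any ambiguity. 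One tiny loose end you could tidy is the case $\unb{\M}_{\phi}^{r}(\ty,\O)=+\ty$ in~\eqref{D_prsten}, which your phrasing ``for $0<\varepsilon<\ell$'' doesn't literally cover, though replacing $\ell-\varepsilon$ by an arbitrary $M>0$ handles it immediately.
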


\begin{proof}
The left-hand side of inequality~\eqref{G_prsten} is a simple consequence of the fact that $|B_{t,\phi t}(0)\cap\O|\leq|B_t(0)^c\cap \O|$.
To prove the rest of the proposition, we observe that
$$
|B_{t}(0)^c\cap\O|=\sum_{n=0}^{\ty}|B_{\phi^n t,\phi^{n+1} t}(0)\cap\O|,
$$
which, in turn, implies that for $r<-N$ we have
$$
\frac{|B_{t}(0)^c\cap\O|}{t^{N+r}}=\sum_{n=0}^{\ty}\phi^{n(N+r)}\frac{|B_{\phi^n t,\phi^{n+1} t}(0)\cap\O|}{(\phi^n t)^{N+r}}.
$$
We now apply Fatou's lemma\footnote{More precisely, first we choose an arbitrary sequence of positive numbers $(t_k)_{k\geq 1}$ such that $t_k\to +\ty$ and apply Fatou's lemma on the counting measure in this case. From that we get the conclusion in the general case when $t\to +\ty$.} and the fact that $\phi^{N+r}<1$ to get
$$
\begin{aligned}
\limsup_{t\to+\ty}\frac{|B_{t}(0)^c\cap\O|}{t^{N+r}}&\leq\sum_{n=0}^{\ty}\phi^{n(N+r)}\limsup_{t\to+\ty}\frac{|B_{\phi^n t,\phi^{n+1} t}(0)\cap\O|}{(\phi^n t)^{N+r}}\\
{\ovb{\M}}^{r}(\ty,\O)&\leq\sum_{n=0}^{\ty}\phi^{n(N+r)}{\ovb{\M}}^{r}_{\phi}(\ty,\O)=\frac{1}{1-\phi^{N+r}}\,{\ovb{\M}}^{r}_{\phi}(\ty,\O).
\end{aligned}
$$
Finally, by the same reasoning applied to the lower limit we get~\eqref{D_prsten} and this concludes the proof of the proposition.
\end{proof}

Since $\phi\mapsto{\unb\M}_{\phi}^{r}(\ty,\O)$ and $\phi\mapsto{\ovb{\M}}_{\phi}^{r}(\ty,\O)$ are nondecreasing functions with values in $[0,+\ty]$, the next corollary follows immediately from the above proposition.

\begin{corollary}
Let $\O\subseteq\eR^N$ be a Lebesgue measurable set with $|\O|<\ty$. Then, for $r<-N$ we have that
\begin{equation}
\lim_{\phi\to+\ty}{\ovb{\M}}_{\phi}^{r}(\ty,\O)={\ovb{\M}}^{r}(\ty,\O)
\end{equation}
and
\begin{equation}
\lim_{\phi\to +\ty}{\unb{\M}}_{\phi}^{r}(\ty,\O)\leq{\unb{\M}}^{r}(\ty,\O)
\end{equation}
\end{corollary}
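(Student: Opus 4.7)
The plan is to deduce both statements essentially by inspection from Proposition~\ref{prstenasta}, after noting two soft facts. First, since $r<-N$ we have $N+r<0$, so $\phi^{N+r}\to 0$ and hence $(1-\phi^{N+r})^{-1}\to 1$ as $\phi\to+\infty$. Second, as remarked in the text, the maps $\phi\mapsto\ovb{\M}_{\phi}^{r}(\ty,\O)$ and $\phi\mapsto\unb{\M}_{\phi}^{r}(\ty,\O)$ are nondecreasing (since $B_{t,\phi t}(0)\cap\O$ increases in $\phi$), and take values in $[0,+\ty]$; therefore the limits in $\phi$ exist in $[0,+\ty]$ and coincide with the respective suprema over $\phi>1$.

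For the upper content, I would combine the two inequalities of Proposition~\ref{prstenasta}:
$$
\ovb{\M}_{\phi}^{r}(\ty,\O)\leq\ovb{\M}^{r}(\ty,\O)\leq\frac{1}{1-\phi^{N+r}}\,\ovb{\M}_{\phi}^{r}(\ty,\O).
$$
Letting $L:=\lim_{\phi\to+\ty}\ovb{\M}_{\phi}^{r}(\ty,\O)\in[0,+\ty]$, the left inequality yields $L\leq\ovb{\M}^{r}(\ty,\O)$. Passing to the limit $\phi\to+\ty$ in the right inequality, and using that $(1-\phi^{N+r})^{-1}\to 1$, gives $\ovb{\M}^{r}(\ty,\O)\leq L$. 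Combining both gives the claimed equality. Strictly speaking, one should handle the case $L=+\ty$ and $L<+\ty$ separately to justify the limit of a product with a possibly infinite factor, but this is straightforward since $(1-\phi^{N+r})^{-1}$ is bounded away from $0$ and tends to a finite positive limit.

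For the lower content, I would simply pass $\phi\to+\ty$ in
$$
\frac{1}{1-\phi^{N+r}}\,\unb{\M}_{\phi}^{r}(\ty,\O)\leq\unb{\M}^{r}(\ty,\O),
$$
using again that the left-hand side converges to $\lim_{\phi\to+\ty}\unb{\M}_{\phi}^{r}(\ty,\O)$ because the prefactor tends to $1$ and the content is monotone in $\phi$.

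There is no real obstacle here: the result is essentially a packaging of Proposition~\ref{prstenasta} together with $\phi^{N+r}\to 0$. The only minor care needed is to invoke monotonicity in $\phi$ to justify that the $\phi$-limits exist before passing them through the sandwich, and to treat the possibly infinite values when interchanging limit and product on the right-hand side of the upper estimate.
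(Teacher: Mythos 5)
Your proof is correct and matches the paper's intended argument; the paper itself simply remarks that the corollary ``follows immediately'' from Proposition~\ref{prstenasta} together with the monotonicity of $\phi\mapsto\ovb{\M}_\phi^r(\ty,\O)$ and $\phi\mapsto\unb{\M}_\phi^r(\ty,\O)$, which is exactly what you spell out. Your care about the case $L=+\ty$ and the justification that the $\phi$-limits exist via monotonicity is a correct filling-in of the details the paper leaves implicit.
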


\begin{remark}\label{the_remark}
In light of the above corollary, a valid question is to interpret the meaning of $\lim_{\phi\to 1^+}{\unb\M}_{\phi}^{r}(\ty,\O)$ and $\lim_{\phi\to 1^+}{\ovb{\M}}_{\phi}^{r}(\ty,\O)$.
Moreover, if ${\unb{\M}}^{r}(\ty,\O)<\ty$ we have from Proposition~\ref{prstenasta} that $\lim_{\phi\to 1^+}{\unb{\M}}_{\phi}^{r}(\ty,\O)=0.$
One would expect that these limits are somehow related to the notion of the {\em surface Minkowski content} investigated by Winter and Rataj in~\cite{RaWi1} and~\cite{winter}.
We will reflect on this in Section~\ref{surface_sec} below.
\end{remark}

The introduction of the $\phi$-shell Minkowski content at infinity raises now the definition of an appropriate notion of a $\phi$-shell Minkowski dimension at infinity in the classical way. 

\begin{definition}[$\phi$-shell Minkowski dimension at infinity]\label{phi_shell_box_def}
Let $\O\subseteq\eR^N$ be a Lebesgue measurable set and let $\phi>1$. 
Now, we can define the {\em upper} {\em $\phi$-shell Minkowski dimension} of $\O$ {\em at infinity}$:$
\begin{equation}\label{upperphiD}
\begin{aligned}
{\ovb{\dim}}_B^{\phi}(\ty,\O)&:=\sup\{r\in\eR\,:\,{\ovb{\mathcal{M}}}_{\phi}^{r}(\ty,\O)=+\ty\}\\
&\phantom{:}=\inf\{r\in\eR\,:\,{\ovb{\mathcal{M}}}_{\phi}^{r}(\ty,\O)=0\};
\end{aligned}
\end{equation}
and analogously the  {\em lower} counterpart denoted by ${\unb{\dim}}_{B}^{\phi}(\ty,\O)$.
\end{definition}

The next example will show that for every Lebesgue measurable set $\O\subseteq\eR^N$ and for every $\phi>1$ we have that ${\unb{\dim}}_{B}^{\phi}(\ty,\O)\le{\ovb{\dim}}_{B}^{\phi}(\ty,\O)\leq 0$.
Note that the fact that ${\ovb{\dim}}_{B}^{\phi}(\ty,\O)\leq 0$ is in accord with the fact that the one-point set $\{\mathbf{N}\}$ has spherical upper $\phi$-shell Minkowski dimension relative to any subset of $\eS^N$ maximally equal to $0$.
We will reflect on this in Subsection \ref{subsec:cpt} in more detail.

\begin{example}\label{er_N}
Let $\O=\eR^N$ and $\phi>1$.
Then we have that $\dim_B^{\phi}(\ty,\eR^N)=0$.
This follows from
$$
|B_{t,\phi t}(0)\cap\eR^N|=|B_{\phi t}(0)|-|B_t(0)|=\frac{\pi^{\frac{N}{2}}(\phi^N-1)t^N}{\Gamma\left(\frac{N}{2}+1\right)}.
$$
Moreover, we have that
$
\M_{\phi}^0(\ty,\eR^N)=\frac{\pi^{\frac{N}{2}}(\phi^N-1)}{\Gamma\left(\frac{N}{2}+1\right)}.
$
\end{example}

As a consequence of the above example we immediately get the following proposition which gives bounds on possible values of the upper and lower $\phi$-shell Minkowski dimension at infinity.

\begin{proposition}\label{<=0}
Let $\O$ be a Lebesgue measurable subset of $\eR^N$ and $\phi>1$.
Then the upper and lower $\phi$-shell Minkowski dimensions of $\O$ at infinity are always nonpositive, i.e.,
$$
{\unb{{\dim}}}_{B}^{\phi}(\ty,\O)\leq{{{\ovb{\dim}}}}_{B}^{\phi}(\ty,\O)\leq 0.
$$
\end{proposition}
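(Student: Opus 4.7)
The plan is to exploit monotonicity of the $\phi$-shell function in the set $\O$ together with the explicit computation performed in Example~\ref{er_N} for the maximal case $\O=\eR^N$. Since $\O\st\eR^N$ we have the trivial pointwise bound
$$
|B_{t,\phi t}(0)\cap\O|\leq|B_{t,\phi t}(0)\cap\eR^N|=\frac{\pi^{N/2}(\phi^N-1)}{\Gamma(N/2+1)}\,t^N
$$
for every $t>0$. Dividing by $t^{N+r}$ and taking the upper limit as $t\to+\ty$ immediately gives, for every $r>0$,
$$
{\ovb{\M}}_{\phi}^{r}(\ty,\O)\leq\frac{\pi^{N/2}(\phi^N-1)}{\Gamma(N/2+1)}\,\limsup_{t\to+\ty}\frac{1}{t^{r}}=0.
$$

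Once we have ${\ovb{\M}}_{\phi}^{r}(\ty,\O)=0$ for all $r>0$, the infimum characterization in~\eqref{upperphiD} forces ${\ovb{\dim}}_{B}^{\phi}(\ty,\O)\leq r$ for every $r>0$, hence ${\ovb{\dim}}_{B}^{\phi}(\ty,\O)\leq 0$. The remaining inequality ${\unb{\dim}}_{B}^{\phi}(\ty,\O)\leq{\ovb{\dim}}_{B}^{\phi}(\ty,\O)$ is a formal consequence of the pointwise inequality ${\unb{\M}}_{\phi}^{r}(\ty,\O)\leq{\ovb{\M}}_{\phi}^{r}(\ty,\O)$ (since $\liminf\leq\limsup$ always), combined with the sup/inf characterizations~\eqref{upperphiD} and~\eqref{lowerphiD}: if $\unb{\M}_{\phi}^{r}=+\ty$ then $\ovb{\M}_{\phi}^{r}=+\ty$ as well, so the set over which the supremum is taken in~\eqref{lowerphiD} is contained in the analogous set in~\eqref{upperphiD}.

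There is essentially no obstacle here; the argument is a two-line consequence of Example~\ref{er_N} once one notices that the entire ball shell $B_{t,\phi t}(0)$ has volume of order $t^N$, which precisely cancels the $t^N$ factor hidden in the normalization $t^{N+r}$ at $r=0$. The only subtlety worth pointing out explicitly in the write-up is that the argument does \emph{not} require the $r=0$ case (where the ${\ovb{\M}}$-value is generally strictly positive, as Example~\ref{er_N} shows); the nonpositivity of the dimensions follows already from arbitrary $r>0$, which is consistent with the fact that dimensions are defined via an infimum over $\{r\,:\,{\ovb{\M}}_{\phi}^{r}=0\}$.
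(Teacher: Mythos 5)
Your argument is correct and is precisely the one the paper intends: the paper states Proposition~\ref{<=0} as an immediate consequence of Example~\ref{er_N}, and your write-up simply makes explicit the monotonicity bound $|B_{t,\phi t}(0)\cap\O|\leq |B_{t,\phi t}(0)|$ together with the resulting vanishing of $\ovb{\M}_{\phi}^{r}(\ty,\O)$ for all $r>0$ and the standard $\liminf\leq\limsup$ comparison. No discrepancy.
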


If we additionally assume that the unbounded set $\O$ has infinite Lebesgue measure, then this is reflected in its $\phi$-shell Minkowski at infinity as follows.

\begin{proposition}\label{>=-N}
Let $\O$ be a Lebesgue measurable subset of $\eR^N$ with $|\O|=\ty$.
Then for every $\phi>1$ we have$:$
$$
-N\leq{{\ovb{\dim}}}_B^{\phi}(\ty,\O)\leq 0.
$$
\end{proposition}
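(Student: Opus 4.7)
The plan is to observe that the upper bound $\ovb{\dim}_B^{\phi}(\ty,\O)\le 0$ is immediate from Proposition~\ref{<=0} (which holds for any Lebesgue measurable set, including those of infinite measure). The real content is the lower bound $\ovb{\dim}_B^{\phi}(\ty,\O)\ge -N$, and I would establish it by proving the stronger statement that $\ovb{\M}_{\phi}^{r}(\ty,\O)=+\infty$ for every $r<-N$.

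The starting observation is that since $|\O|=\infty$ and $|B_T(0)\cap\O|\le|B_T(0)|<\infty$ for every $T>0$, we automatically have $|B_T(0)^c\cap\O|=\infty$ for every $T>0$. On the other hand, exactly as in the proof of Proposition~\ref{prstenasta}, the complement of the ball decomposes into a disjoint union of $\phi$-shells, so
\begin{equation*}
|B_T(0)^c\cap\O|=\sum_{n=0}^{\infty}|B_{\phi^n T,\phi^{n+1}T}(0)\cap\O|.
\end{equation*}

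The proof then proceeds by contradiction. Assume $\ovb{\M}_{\phi}^{r}(\ty,\O)<\infty$ for some $r<-N$. By the definition of $\limsup$ there exist $M>0$ and $T>0$ such that $|B_{t,\phi t}(0)\cap\O|\le M t^{N+r}$ for every $t\ge T$. Specializing to $t=\phi^n T$ and summing in $n\ge 0$ gives
\begin{equation*}
|B_T(0)^c\cap\O|\le MT^{N+r}\sum_{n=0}^{\infty}\phi^{n(N+r)}.
\end{equation*}
Since $r<-N$, we have $\phi^{N+r}<1$, so the geometric series converges and the right-hand side is finite, contradicting $|B_T(0)^c\cap\O|=\infty$. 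Hence $\ovb{\M}_{\phi}^{r}(\ty,\O)=+\infty$ for every $r<-N$, and by Definition~\ref{phi_shell_box_def} this forces $\ovb{\dim}_B^{\phi}(\ty,\O)\ge -N$, completing the proof.

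There is no substantive obstacle here; the argument is essentially the same geometric-series bookkeeping used in Proposition~\ref{prstenasta}, exploited in the reverse direction: the hypothesis $|\O|=\infty$ guarantees the total sum of shell contents diverges, which is incompatible with the geometric decay $M\phi^{n(N+r)}$ one would obtain from finiteness of $\ovb{\M}_{\phi}^{r}(\ty,\O)$ when $r<-N$. The only minor point to be careful about is the use of $\limsup$ rather than $\lim$: one uses the fact that any real number strictly above a finite $\limsup$ is an eventual upper bound, which is exactly how we selected $M$.
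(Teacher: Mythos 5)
Your proof is correct and is essentially the same as the paper's: both rely on the disjoint $\phi$-shell decomposition of $B_T(0)^c\cap\O$ and the geometric-series bound $\sum_n \phi^{n(N+r)}<\infty$ for $r<-N$, reaching the same contradiction with $|\O|=\infty$. The only cosmetic difference is that you phrase it as showing $\ovb{\M}_{\phi}^{r}(\ty,\O)=+\infty$ directly for every $r<-N$, while the paper assumes $\ovb{\dim}_B^{\phi}(\ty,\O)<-N$ and picks an intermediate test exponent $\sigma$; the underlying estimate is identical.
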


\begin{proof}
We reason by contradiction, i.e., we assume that there exists $\phi>0$ such that ${{\ovb{\dim}}}_B^{\phi}(\ty,\O)<-N$.
Then we fix $\sigma\in({{\ovb{\dim}}}_B^{\phi}(\ty,\O),-N)$ and take $T$ large enough such that there exists a constant $M>0$ and
$$
|B_{t,\phi t}(0)\cap\O|\leq Mt^{\sigma+N}
$$
for every $t>T$.
Now we have
$$
|B_T(0)^c\cap\O|=\sum_{n=0}^{\ty}|B_{\phi^{n}T,\phi^{n+1}T}|\leq M\sum_{n=0}^{\ty}(\phi^nT)^{\sigma+N}=\frac{MT^{\sigma+N}}{1-\phi^{\s+N}}<\ty,
$$
since $\s+N<0$ which contradicts the fact that $|\O|=\ty$.
\end{proof}

The statement of the above proposition is optimal, i.e., there are sets of infinite volume with upper $\phi$-shell Minkowski dimension equal to $-N$.
This illustrates the next example in $\eR^2$ and can be easily adapted in the case of $\eR^N$.

\begin{example}
Let $\O:=\{(x,y)\in\eR^2\,:\,x>1,\ 0<y<x^{-1}\}$.
Then, for any $\phi>1$ and $t>1$ it is not difficult to see that have
$$
|B_{t,\phi t}(0)\cap\O|\asymp\int_{t}^{\phi t}\frac{1}{x}\di x=\log(\phi t)-\log t=\log\phi,
$$
where $\asymp$ means that the ratio of left and right side tends to 1, as $t\to +\infty$.
From this we see that ${{\dim}}_B^{\phi}(\ty,\O)=-2$ and $\M_{\phi}^{-2}(\ty,\O)=\log\phi$.
%
%
%
\end{example}

Note that from Proposition~\ref{prstenasta} we immediately obtain the next comparison result concerning sets $\O$ of finite Lebesgue measure and their $\phi$-shell Minkowski dimensions.

\begin{corollary}\label{phidim}
Let $\O\subseteq\eR^N$ be of finite Lebesgue measure such that ${\ovb{\dim}}_B(\ty,\O)<-N$.
Then for every $\phi>1$ we have that
$$
{\ovb{\dim}}_B^{\phi}(\ty,\O)={\ovb{\dim}}_B(\ty,\O)
$$
and
$$
{\unb{\dim}}_{B}^{\phi}(\ty,\O)\leq{\unb{\dim}}_{B}(\ty,\O).
$$
Furthermore, if $D:={\dim}_{B}^{\phi}(\ty,\O)$ exists, then $\dim_{B}(\ty,\O)$ exists and in that case we have
$$
D=\dim_{B}(\ty,\O)=\dim_{B}^{\phi}(\ty,\O).
$$
Moreover, if $\O$ is $\phi$-shell Minkowski measurable at infinity, then it is Minkowski measurable at infinity and in that case we have
$$
{\mathcal{M}}^D(\ty,\O)=\frac{1}{1-\phi^{N+r}}\,{\mathcal{M}}_{\phi}^D(\ty,\O).
$$
\end{corollary}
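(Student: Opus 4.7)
The plan is to derive everything from Proposition~\ref{prstenasta}, exploiting that the hypothesis $\ovb{\dim}_B(\ty,\O)<-N$ confines all critical exponents of interest to the regime $r<-N$ where the two-sided estimates between ${\M}^r$ and ${\M}_\phi^r$ apply. For the upper-dimension equality I would observe that when $r<-N$ the squeeze
$$
{\ovb{\M}}_\phi^r(\ty,\O)\leq{\ovb{\M}}^r(\ty,\O)\leq\frac{1}{1-\phi^{N+r}}\,{\ovb{\M}}_\phi^r(\ty,\O),
$$
with $1/(1-\phi^{N+r})$ strictly positive and finite, yields ${\ovb{\M}}^r(\ty,\O)=0$ if and only if ${\ovb{\M}}_\phi^r(\ty,\O)=0$. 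The left inequality also forces $\ovb{\dim}_B^\phi(\ty,\O)\leq\ovb{\dim}_B(\ty,\O)<-N$, so both infima defining these dimensions are taken in $(-\ty,-N)$, where the above equivalence makes them coincide. For the lower-dimension inequality I would use only $\frac{1}{1-\phi^{N+r}}\,{\unb{\M}}_\phi^r(\ty,\O)\leq{\unb{\M}}^r(\ty,\O)$: for any $r\in(\unb{\dim}_B(\ty,\O),-N)$, which exists since $\unb{\dim}_B(\ty,\O)\leq\ovb{\dim}_B(\ty,\O)<-N$, the right-hand side vanishes, forcing ${\unb{\M}}_\phi^r(\ty,\O)=0$ and hence $\unb{\dim}_B^\phi(\ty,\O)\leq r$; taking the infimum over such $r$ gives the claimed inequality.

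Assuming next that $D:={\dim}_B^\phi(\ty,\O)$ exists, the first part yields $\ovb{\dim}_B(\ty,\O)=\ovb{\dim}_B^\phi(\ty,\O)=D<-N$, and combining $\unb{\dim}_B(\ty,\O)\leq\ovb{\dim}_B(\ty,\O)=D$ with $D=\unb{\dim}_B^\phi(\ty,\O)\leq\unb{\dim}_B(\ty,\O)$ pinches $\unb{\dim}_B(\ty,\O)=D$, so ${\dim}_B(\ty,\O)$ exists and equals $D$. For the measurability statement, with $\ovb{\M}_\phi^D(\ty,\O)=\unb{\M}_\phi^D(\ty,\O)=\M_\phi^D(\ty,\O)$ and $D<-N$, applying Proposition~\ref{prstenasta} at $r=D$ produces
$$
\frac{1}{1-\phi^{N+D}}\M_\phi^D(\ty,\O)\leq\unb{\M}^D(\ty,\O)\leq\ovb{\M}^D(\ty,\O)\leq\frac{1}{1-\phi^{N+D}}\M_\phi^D(\ty,\O),
$$
where the middle step is the trivial inequality between lower and upper content; this chain collapses, yielding Minkowski measurability of $\O$ at infinity together with the displayed identity.

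The only delicate aspect throughout is the bookkeeping ensuring every tested exponent lies in $(-\ty,-N)$; the hypothesis $\ovb{\dim}_B(\ty,\O)<-N$ is tailor-made for exactly this purpose, so the main \emph{obstacle} is simply careful accounting with the $\limsup$/$\liminf$ thresholds relative to $-N$ rather than any new estimate beyond Proposition~\ref{prstenasta}.
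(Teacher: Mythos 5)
Your proof is correct and follows exactly the route the paper intends: the paper gives no explicit argument for Corollary~\ref{phidim}, stating only that it follows immediately from Proposition~\ref{prstenasta}, and your deduction supplies precisely the expected details from that proposition's two-sided content estimates in the regime $r<-N$. The one small point worth flagging is that the displayed formula in the statement has a typo ($\phi^{N+r}$ should read $\phi^{N+D}$), which you correctly identified and used in the form with $D$.
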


The analog of Corollary~\ref{phidim} for the general case when we do not require $\O$ to be of finite Lebesgue measure still holds.
This is the statement of the next proposition that will show that the new notion of the (upper) $\phi$-shell Minkowski dimension at infinity is essentially independent of the choice of $\phi>1$.
This is not true for its lower counterpart as we will see in the example provided after the proposition.

\begin{proposition}\label{phi_independent}
Let $\O\subseteq\eR^N$ be a Lebesgue measurable set and $\phi_1,\phi_2\in\eR$ such that $1<\phi_1<\phi_2$.
Then, for any $r\in\eR\setminus\{-N\}$ we have$:$
\begin{equation}\label{prva_lijevo}
{\ovb{\M}}_{\phi_1}^{r}(\ty,\O)\leq{\ovb{\M}}_{\phi_2}^{r}(\ty,\O)\leq\frac{1-\phi_1^{(N+r)(\lfloor\log_{\phi_1}\phi_2\rfloor+1)}}{1-\phi_1^{N+r}}{\ovb{\M}}_{\phi_1}^{r}(\ty,\O)
\end{equation}
and
\begin{equation}\label{druga_lijevo}
\frac{1-\phi_1^{(N+r)\lfloor\log_{\phi_1}\phi_2\rfloor}}{1-\phi_1^{N+r}}{\unb{\M}}_{\phi_1}^{r}(\ty,\O)\leq{\unb{\M}}_{\phi_2}^{r}(\ty,\O),
\end{equation}
where $\lfloor\cdot\rfloor$ is the floor function.
Furthermore, in the case when $r=-N$ we have$:$
\begin{equation}
{\ovb{\M}}_{\phi_1}^{-N}(\ty,\O)\leq{\ovb{\M}}_{\phi_2}^{-N}(\ty,\O)\leq(\lfloor\log_{\phi_1}\phi_2\rfloor+1){\ovb{\M}}_{\phi_1}^{-N}(\ty,\O)
\end{equation}
and
\begin{equation}
\lfloor\log_{\phi_1}\phi_2\rfloor{\unb{\M}}_{\phi_1}^{-N}(\ty,\O)\leq{\unb{\M}}_{\phi_2}^{-N}(\ty,\O).
\end{equation}
Moreover, for the $\phi$-shell Minkowski dimensions at infinity we have$:$
\begin{equation}
{\ovb{\dim}}_{B}^{\phi_1}(\ty,\O)={\ovb{\dim}}_{B}^{\phi_2}(\ty,\O)
\end{equation}
and
\begin{equation}
{\unb{\dim}}_{B}^{\phi_1}(\ty,\O)\leq{\unb{\dim}}_{B}^{\phi_2}(\ty,\O).
\end{equation}

Finally, if $D:={\dim}_{B}^{\phi_1}(\ty,\O)$ exists, then ${\dim}_{B}^{\phi_2}(\ty,\O)$ exists as well and in that case we have
$$
D={\dim}_{B}^{\phi_1}(\ty,\O)={\dim}_{B}^{\phi_2}(\ty,\O).
$$
In addition, if $\O$ is $\phi_1$-shell Minkowski measurable at infinity, then it is $\phi_2$-shell Minkowski measurable at infinity.
\end{proposition}

\begin{proof}
Firstly, we observe that the left-hand part of~\eqref{prva_lijevo} is a simple consequence of the fact that $|B_{t,\phi_1t}(0)\cap\O|\leq|B_{t,\phi_2t}(0)\cap\O|$.
Secondly, it is easy to see that
$$
\frac{1-\phi_1^{(N+r)\lfloor\log_{\phi_1}\phi_2\rfloor}}{1-\phi_1^{N+r}}>1
$$
is fulfilled regardless of the sign of $N+r\neq 0$.
Consequently, this factor gives us a better estimate in~\eqref{druga_lijevo} than using the same argument as for~\eqref{prva_lijevo}.
Now, we let $k:=\lfloor\log_{\phi_1}\phi_2\rfloor$ and observe that
$$
\sum_{n=0}^{k-1}|B_{\phi_1^nt,\phi_1^{n+1}t}(0)\cap\O|\leq|B_{t,\phi_2t}(0)\cap\O|\leq\sum_{n=0}^{k}|B_{\phi_1^nt,\phi_1^{n+1}t}(0)\cap\O|.
$$
Furthermore, from this we get that
$$
\sum_{n=0}^{k-1}\phi_1^{n(N+r)}\frac{|B_{\phi_1^nt,\phi_1^{n+1}t}(0)\cap\O|}{(\phi_1^nt)^{N+r}}\leq\frac{|B_{t,\phi_2t}(0)\cap\O|}{t^{N+r}}\leq\sum_{n=0}^{k}\phi_1^{n(N+r)}\frac{|B_{\phi_1^nt,\phi_1^{n+1}t}(0)\cap\O|}{(\phi_1^nt)^{N+r}}.
$$
Finally, taking the upper and lower limits when $t\to+\ty$ gives us
$$
{\ovb{\M}}_{\phi_2}^{r}(\ty,\O)\leq{\ovb{\M}}_{\phi_1}^{r}(\ty,\O)\sum_{n=0}^{k}\phi_1^{n(N+r)}=
\begin{cases}
\frac{1-\phi_1^{(N+r)(\lfloor\log_{\phi_1}\phi_2\rfloor+1)}}{1-\phi_1^{N+r}}{\ovb{\M}}_{\phi_1}^{r}(\ty,\O),&r\neq -N\\
(\lfloor\log_{\phi_1}\phi_2\rfloor+1){\ovb{\M}}_{\phi_1}^{-N}(\ty,\O),&r=-N
\end{cases}
$$
and
$$
{\unb{\M}}_{\phi_2}^{r}(\ty,\O)\geq{\unb{\M}}_{\phi_1}^{r}(\ty,\O)\sum_{n=0}^{k-1}\phi_1^{n(N+r)}=
\begin{cases}
\frac{1-\phi_1^{(N+r)\lfloor\log_{\phi_1}\phi_2\rfloor}}{1-\phi_1^{N+r}}{\unb{\M}}_{\phi_1}^{r}(\ty,\O),&r\neq -N\\
\lfloor\log_{\phi_1}\phi_2\rfloor{\unb{\M}}_{\phi_1}^{-N}(\ty,\O),&r=-N.
\end{cases}
$$
\end{proof}

The next example demonstrates that the lower $\phi$-shell Minkowski dimension can depend on the parameter $\phi$ in general. 

\begin{example}\label{donji_kontraprim}
Let us fix a number $q>0$ and define $\O\subseteq\eR$ as a disjoint union of intervals:
$$
\O:=\bigcup_{n=0}^{\ty}\left(2^{2n+1},2^{2n+1}+\frac{1}{2^{2nq}}\right).
$$
Note that $|\O|=\sum_{n=0}^{\ty}2^{-2nq}=4^q/(4^q-1)$.
We take $\phi=2$ and  observe that for the sequence $t_n:=2^{2n}$, where $n\geq 0$ we have that $|B_{t_n,2t_n}(0)\cap\O|=0$.
This implies that ${\unb{\M}}_2^{r}(\ty,\O)=0$ for every $r\in\eR$, and, consequently, $\unb{\dim}_B^{2}(\ty,\O)=-\ty$.
On the other hand, if we take $\phi=4$, we have for $n\in\eN\cup\{0\}$ that
$$
|B_{t,4t}(0)\cap\O|=
\begin{cases}
4^{-nq},&t\in[2^{2n},2^{2n+1}]\\
4^{-nq}+3(t-2^{2n+1}),&t\in[2^{2n+1},2^{2n+1}+4^{-(n+1)q-1}]\\
4^{-nq}+4^{-(n+1)q}+2^{2n+1}-t,&t\in[2^{2n+1}+4^{-(n+1)q-1},2^{2n+1}+4^{-nq}]\\
4^{-(n+1)q},&t\in[2^{2n+1}+4^{-nq},2^{2(n+1)}].
\end{cases}
$$
As we can see, the $4$-shell function is constant on the intervals of the first and fourth type above, and linear on the intervals of the second and third type.

\begin{figure}[h]
\begin{center}
\includegraphics[width=11cm]{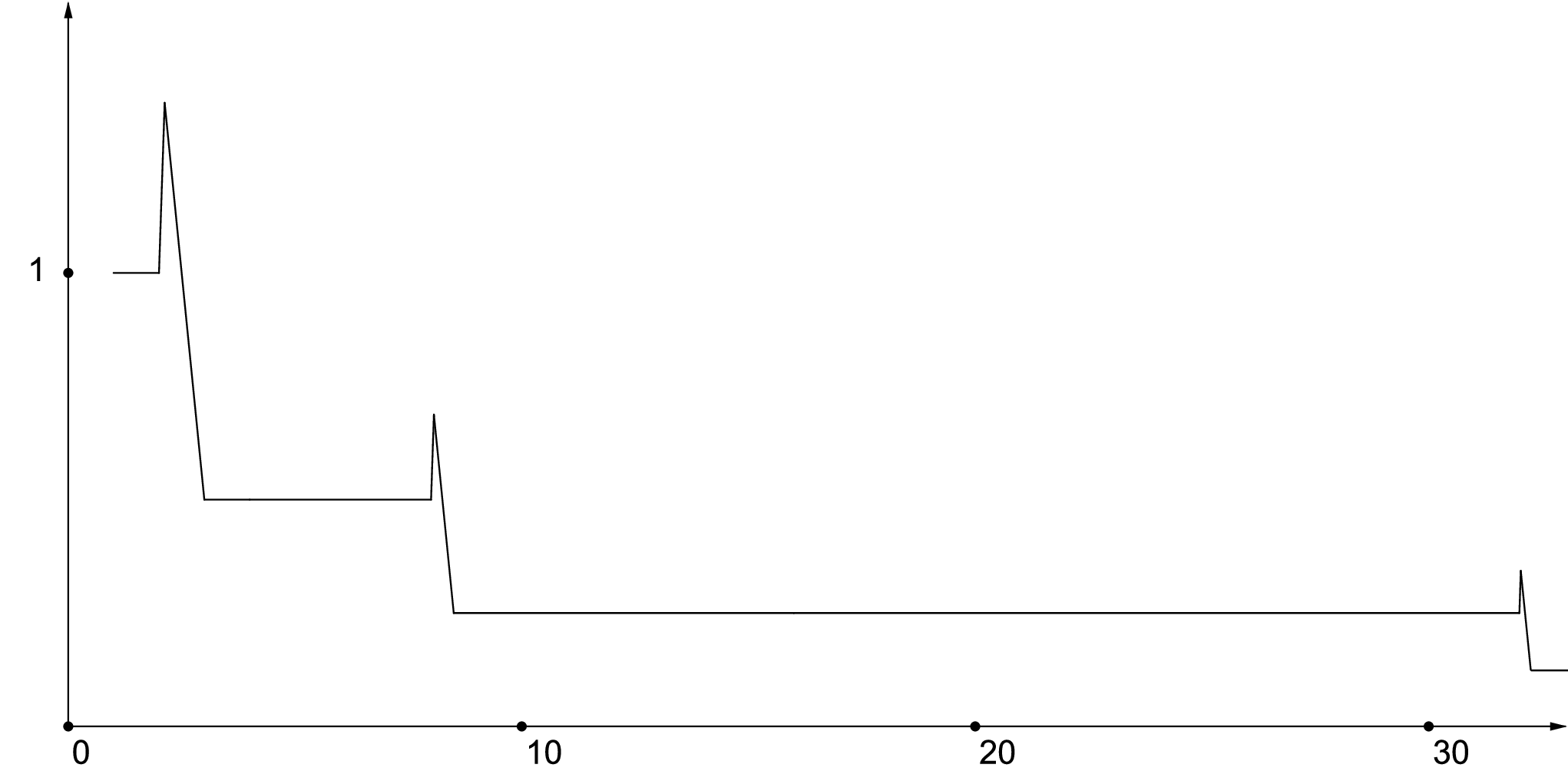}
\end{center}
\caption{A plot of the function $t\mapsto|B_{t,4t}(0)\cap\O|$ from Example \ref{donji_kontraprim}. Here, the parameter $q$ is equal to $1/2$. Note that the axes are not in the same scale.}
\label{tent}
\end{figure} 
In other words, it is a kind of a step function with `tents' between every two steps (see Figure \ref{tent}): 
$$
|B_{t,4t}(0)\cap\O|=
\begin{cases}
2^{q\{\log_2t\}}t^{-q},&t\in[2^{2n},2^{2n+1}]\\
2^{q(1-\{\log_2t\})}t^{-q}+3(t-2^{2n+1}),&t\in[2^{2n+1},2^{2n+1}+4^{-(n+1)q-1}]\\
2^{-q\{\log_2t\}}(2^q+2^{-q})t^{-q}+2^{2n+1}-t,&t\in[2^{2n+1}+4^{-(n+1)q-1},2^{2n+1}+4^{-nq}]\\
2^{q(-1-\{\log_2t\})}t^{-q},&t\in[2^{2n+1}+4^{-(n+1)q-1},2^{2(n+1)}],
\end{cases}
$$
where $\{\cdot\}$ denotes the fractional part function.
From this we have
$$
\frac{1}{t^{q}}\leq|B_{t,4t}(0)\cap\O|\leq\frac{2^q(1+3\cdot 4^{-q-1})}{t^{q}}
$$
which, in turn, implies that $\dim_B^4(\ty,\O)=-1-q$ and
$$
1\leq{\unb{\M}}_4^{-1-q}(\ty,\O)\leq{\ovb{\M}}_4^{-1-q}(\ty,\O)\leq 2^q(1+3\cdot 4^{-q-1}).
$$
This demonstrates that the conclusions of Proposition~\ref{phi_independent} concerning the lower $\phi$-shell Minkowski content and $\phi$-shell Minkowski dimension at infinity cannot be improved in general\footnote{A similar example can be constructed in $\eR^N$ by using the shells of appropriate radii of the $N$-dimensional ball centered at the origin.\label{remm}} since we have
$$
0={\unb{\M}}_2^{-1-q}(\ty,\O)<{\unb{\M}}_4^{-1-q}(\ty,\O)
$$
and
$$
-\ty=\unb{\dim}_B^{2}(\ty,\O)<{\ovb{\dim}}_B^2(\ty,\O)=\dim_B^4(\ty,\O)=-1-q.
$$
\end{example}

\section{Lapidus zeta functions at infinity and the $\phi$-shell Minkowski content}\label{phi_zeta}

In this section we will show that the results about the Lapidus zeta functions of subsets of finite Lebesgue measure at infinity studied in \cite{ra2,ra3} can be generalized to the case of subsets that do not have finite Lebesgue measure.
The generalization will be made by using the notions of the $\phi$-shell Minkowski contents and Minkowski dimensions at infinity.
To this end we will need the following result.

\begin{proposition}\label{integralna_veza_inf}
Let $\O\subseteq\eR^N$ be a Lebesgue measurable set, $T>0$ and $u\colon(T,+\ty)\to[0,+\ty)$ a strictly decreasing $C^1$ function such that $u(t)\to0$ as $t\to +\ty$. Then, the following equality holds
\begin{equation}\label{int_vez_inf}
\int_{{B_T(0)^c\cap\O}}u(|x|)\di x=\int_T^{+\ty}|{B_{T,t}(0)\cap\O}||u'(t)|\di t.
\end{equation}
\end{proposition}

\begin{proof}
We will use the well known fact which we recall here:
\begin{equation}\label{slojevita_2}
\int_Xf(x)\di x=\int_0^{\infty}|\{x\in X\,:\, f(x)\geq t\}|\di t,
\end{equation}
where $f$ is a nonnegative Borel function on a separable metric space $X$ (see, e.g., \cite{rudin}).
We let $f(x):=u(|x|)$  and $X:={{B_T(0)^c\cap\O}}$.
By assumption $u$ is strictly decreasing and $u(+\infty):=\lim_{\tau\to+\ty}u(\tau)=0$.
For the set appearing on the right side of \eqref{slojevita_2} we have
$$
A(t):=\{x\in {B_T(0)^c\cap\O}\,:\, u(|x|)\geq t\}=\{x\in {B_T(0)^c\cap\O}\,:\, |x|\leq u^{-1}(t)\}.
$$
For $0=u(+\ty)<t<u(T)$ it is clear that
$$
A(t)=({B_T(0)^c\cap\O})\setminus (B_{u^{-1}(t)}(0)^c\cap\O)=B_{T,u^{-1}(t)}(0)\cap\O.
$$
Furthermore, for $t\geq u(T)$ we have that $A(t)=\emptyset$ because $u(T)=\max_{\tau\geq 0} u(\tau)$ and using \eqref{slojevita_2} we get
$$
\begin{aligned}
\int_{{B_T(0)^c\cap\O}}u(|x|)\di x&=\int_{u(+\ty)}^{u(T)}|B_{T,u^{-1}(t)}(0)\cap\O|\di t=\int_{+\ty}^{T}|B_{T,s}(0)\cap\O|u'(s)\di s\\
&=\int_{T}^{+\ty}|B_{T,s}(0)\cap\O||u'(s)|\di s,
\end{aligned}
$$
where we have introduced the new variable $s=u^{-1}(t)$ in the second to last equality and this concludes the proof of the proposition.
\end{proof}

The following proposition now complements \cite[Prop.\ 4]{ra3}.

\begin{proposition}\label{cor_infint_1}
Let $\O\subseteq\eR^N$ be a measurable set with $|\O|=\ty$, $T>0$ and $\phi>1$.
Then for every $\sigma\in({{\ovb{\dim}}}_B^{\phi}(\ty,\O),+\ty)$, the following identity holds$:$
\begin{equation}\label{infzetaint_inf}
\int_{B_T(0)^c\cap\O}|x|^{-\s-N}\di x=(\s+N)\int_T^{+\ty}t^{-\s-N-1}|B_{T,t}(0)\cap\O|\di t.
\end{equation}
Furthermore, the above integrals are finite for such $\sigma$. 
\end{proposition}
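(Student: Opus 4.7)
The plan is to apply Proposition \ref{integralna_veza_inf} to the specific weight $u(t):=t^{-\sigma-N}$, then establish finiteness of the resulting integral by means of a dyadic (or rather $\phi$-adic) shell decomposition that exploits the upper $\phi$-shell Minkowski dimension bound on $\O$.

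First I would verify the hypotheses on $u$. By Proposition~\ref{>=-N}, since $|\O|=\ty$, we have $\sigma>{{\ovb{\dim}}}_B^{\phi}(\ty,\O)\geq-N$, so $\sigma+N>0$. Hence $u(t)=t^{-\sigma-N}$ is strictly decreasing, belongs to $C^1(T,+\ty)$, and tends to $0$ as $t\to+\ty$, with $|u'(t)|=(\sigma+N)t^{-\sigma-N-1}$. Proposition~\ref{integralna_veza_inf} then immediately yields the identity~\eqref{infzetaint_inf} as an equality in $[0,+\ty]$.

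The core of the argument is to show that the right-hand side of~\eqref{infzetaint_inf} is finite. Pick $\sigma'\in(\max\{{\ovb{\dim}}_B^{\phi}(\ty,\O),-N\},\,\sigma)$; this interval is nonempty by the preceding paragraph. By the definition of the upper $\phi$-shell Minkowski dimension, ${\ovb\M}_{\phi}^{\sigma'}(\ty,\O)<+\ty$, so there exist $M>0$ and $t_0\geq T$ with
$$
|B_{t,\phi t}(0)\cap\O|\leq Mt^{\sigma'+N}\qquad\text{for every }t\geq t_0.
$$
Now, for any $t>T$, choose $n=\lceil\log_{\phi}(t/T)\rceil$ so that $B_{T,t}(0)\subseteq\bigsqcup_{k=0}^{n-1}B_{\phi^kT,\phi^{k+1}T}(0)$. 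Splitting the sum at the first index $k_0$ with $\phi^{k_0}T\geq t_0$, we bound
$$
|B_{T,t}(0)\cap\O|\leq C_0+M\sum_{k=k_0}^{n-1}(\phi^kT)^{\sigma'+N},
$$
where $C_0$ is a constant depending only on $T,\phi,t_0$ (absorbing the finitely many shells contained in $B_{t_0}(0)$, each of finite Lebesgue measure). Since $\sigma'+N>0$, the geometric sum is dominated by $C(\phi^nT)^{\sigma'+N}\leq C't^{\sigma'+N}$. Consequently,
$$
|B_{T,t}(0)\cap\O|\leq C_1\,t^{\sigma'+N}\qquad\text{for all }t\geq T,
$$
possibly after enlarging the constant.

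Substituting this bound gives
$$
\int_T^{+\ty}t^{-\sigma-N-1}|B_{T,t}(0)\cap\O|\di t\leq C_1\int_T^{+\ty}t^{\sigma'-\sigma-1}\di t<+\ty,
$$
since $\sigma'-\sigma-1<-1$. Finiteness of~\eqref{infzetaint_inf} and the identity itself thus follow. The only subtle point, and the one I would flag as the main obstacle, is the shell-summation step: one must make sure that $\sigma'$ can be chosen strictly above $-N$ (so the geometric series is summable on the finite tail and grows only polynomially), which is precisely guaranteed by Proposition~\ref{>=-N}; were $|\O|<\ty$ and ${\ovb{\dim}}_B^{\phi}(\ty,\O)<-N$, one would instead be in the setting of \cite[Prop.\ 4]{ra3} and the estimate would take a different form.
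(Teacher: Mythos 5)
Your proof is correct and takes essentially the same route as the paper: apply Proposition~\ref{integralna_veza_inf} with $u(t)=t^{-\sigma-N}$ (legitimate because $|\O|=\ty$ forces $\sigma>{\ovb{\dim}}_B^{\phi}(\ty,\O)\geq -N$ by Proposition~\ref{>=-N}), then establish finiteness via a $\phi$-adic shell decomposition using the bound $|B_{t,\phi t}(0)\cap\O|\leq Mt^{\sigma'+N}$ for an intermediate exponent $\sigma'$. The only cosmetic difference is that you first extract the clean polynomial bound $|B_{T,t}(0)\cap\O|\leq C_1t^{\sigma'+N}$ (using $\sigma'+N>0$ to dominate the geometric sum by its last term) and then integrate, whereas the paper carries out an equivalent inline double sum over the shells; your handling of the transitional shells below $t_0$ via a constant $C_0$ is also a little tidier than the paper's ``take $T$ large enough.''
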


\begin{proof}
First we observe that the condition $|\O|=\ty$ implies that ${{\ovb{\dim}}}_B^{\phi}(\ty,\O)\geq -N$ 
From this we have that for $\sigma\in({{\ovb{\dim}}}_B^{\phi}(\ty,\O),+\ty)$ the function $u(t):=t^{-\s-N}$ satisfies the conditions of Proposition~\ref{integralna_veza_inf} and from that we get~\eqref{infzetaint_inf}.
It remains to show that the integral on the right hand of \eqref{infzetaint_inf} side is finite.
To that end, we fix $\sigma_1\in({{\ovb{\dim}}}_B^{\phi}(\ty,\O),\sigma)$. Then for $T$ large enough we have that for a constant $M>0$ we have
$$
|B_{t,\phi t}\cap\O|\leq Mt^{\sigma_1+N}
$$
for every $t\geq T$, which, in turn, implies that
\begin{equation}\label{B}
|B_{\phi^nT,\phi^{n+1}T}\cap\O|\leq MT^{\sigma_1+N}\phi^{n(\s_1+N)},
\end{equation}
for every $n\in\eN$.
Let us now denote
$$
I_T:=\int_T^{+\ty}t^{-\s-N-1}|B_{T,t}(0)\cap\O|\di t
$$
and calculate
$$
\begin{aligned}
I_T&=\sum_{n=0}^{\ty}\int_{\phi^{n}T}^{\phi^{n+1}T}t^{-\s-N-1}|B_{T,t}(0)\cap\O|\di t\\
&\leq\sum_{n=0}^{\ty}\int_{\phi^{n}T}^{\phi^{n+1}T}t^{-\s-N-1}|B_{T,\phi^{n+1}T}(0)\cap\O|\di t\\
&=\sum_{n=0}^{\ty}\int_{\phi^{n}T}^{\phi^{n+1}T}t^{-\s-N-1}\sum_{k=0}^{n}|B_{\phi^kT,\phi^{k+1}T}(0)\cap\O|\di t.\\
\end{aligned}
$$
Then, by using~\eqref{B}, we have
$$
\begin{aligned}
I_T&\leq MT^{\s_1+N}\sum_{n=0}^{\ty}\sum_{k=0}^{n}\phi^{k(\s_1+N)}\int_{\phi^{n}T}^{\phi^{n+1}T}t^{-\s-N-1}\di t\\
&=MT^{\s_1+N}\sum_{n=0}^{\ty}\frac{\phi^{(n+1)(\s_1+N)}-1}{\phi^{\s_1+N}-1}\,\frac{(\phi^{n}T)^{-\s-N}-(\phi^{n+1}T)^{-\s-N}}{\s+N}\\
&=\underbrace{\frac{MT^{\s_1-\s}(1-\phi^{-\s-N})}{(\s+N)(\phi^{\s_1+N}-1)}}_{K}\sum_{n=0}^{\ty}(\phi^{(n+1)(\s_1+N)}-1)\phi^{n(-\s-N)}\\
&\leq K\sum_{n=0}^{\ty}\phi^{(n+1)(\s_1+N)+n(-\s-N)}=K\phi^{\s_1+N}\sum_{n=0}^{\ty}(\phi^{\s_1-\s})^n<+\ty,
\end{aligned}
$$
since $\s_1-\s<0$ and $\phi>1$.
\end{proof}

Now we can state and prove the holomorphicity theorem for the Lapidus zeta function $\zeta_{\ty,\O}$ at infinity (introduced in \cite[Section 3]{ra2}) that extends \cite[Thm.\ 5]{ra2} and also holds in the case of Lebesgue measurable sets of infinite measure.

\begin{theorem}[Main theorem A]\label{analiticinf_inf}
Let $\O$ be any Lebesgue measurable subset of $\eR^N$.
Assume that $T$ is a fixed positive number and $\phi>1$.
Then the following conclusions hold.

\noindent $(a)$ The distance zeta function at infinity\footnote{Note that $\zeta_{\ty,\O}$ also depends on $T$ but we usually do not denote this because changing $T$ results in adding an entire function to \eqref{inteqzeta_inf}. However, this is not relevant for the singularities of $\zeta_{\ty,\O}$ which are the main topic for the theory of complex dimensions; see \cite[Section 3]{ra2}.}
\begin{equation}\label{inteqzeta_inf}
\zeta_{\ty,\O}(s):=\int_{{B_T(0)^c\cap\O}}|x|^{-s-N}\di x
\end{equation}
is holomorphic on the half-plane $\{\re s>{\ovb{\dim}}_B^{\phi}(\ty,\O)\}$ and for every complex number $s$ in that half-plane
\begin{equation}\label{zetainfder_inf}
\zeta'_{\ty,\O}(s)=-\int_{{B_T(0)^c\cap\O}}|x|^{-s-N}\log|x|\di x.
\end{equation}

\noindent $(b)$ The half-plane from $(a)$ is optimal.\footnote{Optimal in the sense that the integral appearing in~\eqref{inteqzeta_inf} is divergent for $s\in(-\ty,{{\ovb{\dim}}}_B^{\phi}(\ty,\O))$.}

\noindent $(c)$ If $D:={\dim_B^{\phi}(\ty,\O)}$ exists and ${\unb{\M}}_{\phi}^D(\ty,\O)>0$, then $\zeta_{\ty,\O}(s)\to+\ty$ for $s\in\eR$ as $s\to D^+$.
\end{theorem}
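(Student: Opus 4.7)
The plan is to prove the three parts in sequence, all founded on the dyadic annular decomposition
\[
_T\O = \bigsqcup_{n=0}^{\infty} B_{\phi^n T,\, \phi^{n+1} T}(0) \cap \O,
\]
which is tailored to the $\phi$-shell contents and bypasses the finite-measure assumption underlying the classical results of \cite{ra,ra2}. On the $n$-th annulus the integrand $|x|^{-s-N}$ is trapped between its two endpoint values $(\phi^n T)^{-s-N}$ and $(\phi^{n+1} T)^{-s-N}$, with a universal ratio $\phi^{\pm(s+N)}$; this single estimate will do the heavy lifting irrespective of the sign of $\re s + N$.

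For part $(a)$, I would first establish absolute convergence for real $s>\ovb{\dim}_B^\phi(\ty,\O)$. Pick $\sigma_1\in(\ovb{\dim}_B^\phi(\ty,\O),s)$; by definition of the $\phi$-shell Minkowski dimension there exist $M>0$ and $n_0\in\eN$ with $|B_{\phi^n T,\phi^{n+1}T}(0)\cap\O|\le M(\phi^n T)^{N+\sigma_1}$ for $n\ge n_0$. Using the endpoint upper bound $|x|^{-s-N}\le \max(1,\phi^{-s-N})(\phi^n T)^{-s-N}$ on each annulus and summing yields a convergent geometric series of ratio $\phi^{\sigma_1-s}<1$. Because $||x|^{-s-N}|=|x|^{-\re s-N}$, the estimate carries over verbatim to complex $s$ with $\re s>\ovb{\dim}_B^\phi(\ty,\O)$. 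Holomorphicity and the derivative formula~\eqref{zetainfder_inf} then follow by the standard Lebesgue dominated-convergence argument, dominating the difference quotient on a small disk around a fixed $s_0$ by $|x|^{-(\re s_0-\e)-N}|\log|x||$, which itself is integrable on $_T\O$ by the very same annular estimate once $\e>0$ is chosen so that $\re s_0-2\e$ still exceeds $\ovb{\dim}_B^\phi(\ty,\O)$.

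For part $(b)$, fix real $s<\ovb{\dim}_B^\phi(\ty,\O)$ and pick $\sigma$ with $s<\sigma<\ovb{\dim}_B^\phi(\ty,\O)$. Since $\ovb{\M}_\phi^\sigma(\ty,\O)=+\infty$, there is a sequence $t_n\to+\infty$ along which $|B_{t_n,\phi t_n}(0)\cap\O|/t_n^{N+\sigma}\to+\infty$. Restricting the integral to the single annulus $[t_n,\phi t_n]$ and using the endpoint lower bound $|x|^{-s-N}\ge c_\phi t_n^{-s-N}$ with $c_\phi:=\min(1,\phi^{-s-N})>0$ (which handles both signs of $s+N$) produces
\[
\int_{_T\O}|x|^{-s-N}\di x \;\ge\; c_\phi\, t_n^{\sigma-s}\cdot\frac{|B_{t_n,\phi t_n}(0)\cap\O|}{t_n^{N+\sigma}}\;\longrightarrow\;+\infty,
\]
forcing $\zeta_{\ty,\O}(s)=+\infty$. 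For part $(c)$, the assumption $\unb{\M}_\phi^D(\ty,\O)>0$ gives $c>0$ and $n_1\in\eN$ with $|B_{\phi^n T,\phi^{n+1}T}(0)\cap\O|\ge c(\phi^n T)^{N+D}$ for $n\ge n_1$. By Proposition~\ref{>=-N} (and by Corollary~\ref{phidim} when $|\O|<\ty$) we have $D\ge -N$, so for real $s>D$ the integrand $|x|^{-s-N}$ is decreasing, and its outer-endpoint lower bound on the $n$-th annulus is $(\phi^{n+1}T)^{-s-N}$. Summing gives
\[
\zeta_{\ty,\O}(s)\;\ge\; c\,\phi^{-s-N}T^{D-s}\sum_{n\ge n_1}\phi^{n(D-s)}\;=\;\frac{c\,\phi^{-s-N}T^{D-s}\phi^{n_1(D-s)}}{1-\phi^{D-s}},
\]
a geometric series whose sum behaves like $(s-D)^{-1}/\log\phi$ as $s\to D^+$ and therefore diverges to $+\infty$.

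The main technical hurdle, recurring in all three parts, is that $|\O|$ may be infinite, so one cannot invoke any decay-at-infinity of the Lebesgue measure on which \cite{ra,ra2} rely; the $\phi$-indexed annular decomposition replaces this, and the only subtle bookkeeping is keeping the sign of $\re s + N$ under control in the endpoint estimates for $|x|^{-s-N}$ — resolved uniformly by the constants $\max(1,\phi^{-s-N})$ and $\min(1,\phi^{-s-N})$ for upper and lower bounds respectively.
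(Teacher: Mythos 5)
Your annular-decomposition framework is the right one and matches the paper's key idea; parts $(a)$ and $(b)$ are correct. In part $(b)$ you even go a more direct route than the paper: you bound $\zeta_{\ty,\O}(s)$ from below by a single shell $B_{t_n,\phi t_n}(0)\cap\O$ using the uniform two-sided endpoint constant $c_\phi=\min(1,\phi^{-s-N})$, whereas the paper first passes through the integral identity of Proposition~\ref{cor_infint_1} and then extracts the shell. Your version also handles $s\le -N$ and $s\in(-N,\ovb{D})$ in one stroke, while the paper splits these cases; this is a genuine simplification and a valid alternative proof of $(b)$.

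In part $(c)$, however, there is a gap. You assert that ``by Proposition~\ref{>=-N} (and by Corollary~\ref{phidim} when $|\O|<\ty$) we have $D\ge -N$,'' but Corollary~\ref{phidim} gives the \emph{opposite}: when $|\O|<\ty$ one has $\ovb{\dim}_B(\ty,\O)\le -N$ and (when it is $<-N$) $D={\dim}_B^{\phi}(\ty,\O)={\dim}_B(\ty,\O)<-N$. So $D<-N$ can certainly occur, and then $s\to D^+$ passes through values with $s+N<0$, where $|x|^{-s-N}$ is \emph{increasing} in $|x|$; your outer-endpoint estimate $(\phi^{n+1}T)^{-s-N}$ is then an overestimate, not a lower bound, and the chain of inequalities breaks. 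The fix is small: either use the same constant $c_\phi=\min(1,\phi^{-s-N})$ you already introduced in part $(b)$ (the geometric sum $\sum_{n\ge n_1}\phi^{n(D-s)}$ still blows up like $(s-D)^{-1}$ regardless of the sign of $s+N$), or dispatch the $D<-N$ case separately via the finite-measure result from \cite{ra2} together with the comparison ${\unb{\M}}_{\phi}^{D}(\ty,\O)\le(1-\phi^{N+D}){\unb{\M}}^{D}(\ty,\O)$ from Proposition~\ref{prstenasta}, which is exactly what the paper does. As written, the argument for $(c)$ only covers $D\ge -N$.
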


\begin{proof}

Firstly we note that if $|\O|<\ty$, then in light of the fact that in this case the upper Minkowski dimension of $\O$ at infinity coincides with its upper $\phi$-shell Minkowski dimension at infinity (see Corollary~\ref{phidim}) the statements $(a)$ and $(b)$ of the theorem follow immediately from \cite[Thm.\ 5]{ra2}.
Additionally, if ${\ovb{\dim}}_B^{\phi}(\ty,\O)<-N$, then part $(c)$ also follows from \cite[Thm.\ 5]{ra2} by using the fact that ${{\unb{\M}_{\phi}^{D}}}(\ty,\O)\leq(1-\phi^{N+D}){\unb{\M}^{D}}(\ty,\O)$ (see Proposition~\ref{prstenasta}), so that ${{\unb{\M}_{\phi}^{D}}}(\ty,\O)>0$ implies that ${\unb{\M}^{D}}(\ty,\O)>0$.

It remains to prove the theorem in the case when $|\O|=\ty$.
First, we fix $\phi>1$ and observe that in light of Proposition~\ref{>=-N} and Proposition~\ref{<=0} we have that ${\ovb{\dim}}_B^{\phi}(\ty,\O)\in[-N,0]$.

$(a)$ If we let $\ovb{D}:={{\ovb{\dim}}}_B^{\phi}(\ty,\O)$, then from the definitions of the upper $\phi$-shell Minkowski content and of the upper $\phi$-shell Minkowski dimension at infinity we have that
$
\limsup_{t\to+\ty}\frac{|B_{t,\phi t}(0)\cap\O|}{t^{N+\sigma}}=0
$
for every $\sigma>\ovb{D}$.
Now, let us fix $\sigma_1$ such that $\ovb{D}<\sigma_1<\sigma$ and take $T>1$ large enough, such that for a constant $M>0$ it holds that
$$
|B_{t,\phi t}(0)\cap\O|\leq Mt^{\sigma_1+N}\quad\textrm{for every}\quad t\geq T,
$$
which implies that
$$
|B_{\phi^nT,\phi^{n+1}T}(0)\cap\O|\leq MT^{\sigma_1+N}\phi^{n(\s_1+N)}\quad\textrm{for every}\quad n\in\eN.
$$
Furthermore, since $\s>\ov{D}\geq -N$ we have that $-\sigma-N<0$ and we estimate $\zeta_{\ty,\O}(\sigma)$ in the following way
$$
\begin{aligned}
\zeta_{\ty,\O}(\sigma)&=\int_{B_T(0)^c\cap\O}|x|^{-\sigma-N}\di x=\sum_{n=0}^{\ty}\int_{B_{\phi^nT,\phi^{n+1}T}\cap\O}|x|^{-\sigma-N}\di x\\
&\leq T^{-\sigma-N}\sum_{n=0}^{\ty}\phi^{n(-\s-N)}|B_{\phi^nT,\phi^{n+1}T}\cap\O|\\
&\leq T^{-\sigma-N}\sum_{n=0}^{\ty}\phi^{n(-\s-N)}MT^{\sigma_1+N}\phi^{n(\s_1+N)}\\
&=MT^{\s_1-\sigma}\sum_{n=0}^{\ty}(\phi^{\s_1-\s})^n<\infty.
\end{aligned}
$$
The last inequality follows from the fact that $\phi>1$ and $\sigma_1-\sigma<0$.
Similarly as in the proof of \cite[Thm.\ 5]{ra2}, we let now $E:={B_T(0)^c\cap\O}$, $\varphi(x):=|x|$ and $\di\mu(x):=|x|^{-N}\di x$ and note that $\varphi(x)\geq T>1$ for $x\in E$.
Part $(a)$ follows now from \cite[Thm.\ 4(b)]{ra2}.

To prove part $(b)$ of the theorem we denote $\ovb{D}:={\ovb{\dim}}_B^{\phi}(\ty,\O)\in[-N,0]$.
In case $s\leq -N$ we have
$$
\int_{B_T(0)^c\cap\O}|x|^{-s-N}\di x\geq T^{-s-N}\int_{B_T(0)^c\cap\O}\di x=+\ty.
$$
It remains to prove the case when $s\in(-N,\ovb{D})$.
By using Proposition~\ref{cor_infint_1}, we have
\begin{equation}
\begin{aligned}
I_T:=\int_{B_T(0)^c\cap\O}|x|^{-s-N}\di x&=(s+N)\int_{T}^{+\ty}t^{-s-N-1}|B_{T,t}(0)\cap\O|\di t\\
&\geq (s+N)\int_{\phi T}^{\phi^2 T}t^{-s-N-1}|B_{T,t}(0)\cap\O|\di t\\
&\geq (s+N)|B_{T,\phi T}(0)\cap\O|\int_{\phi T}^{\phi^2 T}t^{-s-N-1}\di t\\
&=\phi^{-s-N}(1-\phi^{-s-N})T^{-s-N}|B_{T,\phi T}(0)\cap\O|.
\end{aligned}
\end{equation}
Now, we fix $\sigma$ such that $s<\sigma<\ovb{D}$.
From ${{\ovb{\M}}}_{\phi}^{\sigma}(\ty,\O)=+\ty$ we conclude that there exists a sequence $(t_k)_{k\geq 1}$ such that
$$
C_k:=\frac{|{B_{t_k,\phi t_k}(0)\cap\O}|}{t_k^{N+\sigma}}\to+\ty\quad\textrm{when}\quad t_k\to+\ty.
$$
It is clear that the function $T\to I_T$ is nonincreasing and we have
\begin{equation}
\begin{aligned}
I_T\geq I_{t_k}&\geq \phi^{-s-N}(1-\phi^{-s-N})t_k^{-s-N}|B_{t_k,\phi t_k}(0)\cap\O|\\
&=\phi^{-s-N}(1-\phi^{-s-N})t_k^{-s-N}C_kt_k^{N+\s}\\
&=\phi^{-s-N}(1-\phi^{-s-N})C_kt_k^{\sigma-s}\to+\ty.
\end{aligned}
\end{equation}
Therefore, $I_T=+\ty$ for every $s<\ovb{D}$ which proves part $(b)$.

For part $(c)$ we assume that $D={\dim}_B^{\phi}(\ty,\O)$ exists, $D\in[-N,0]$ and ${{\unb{\M}}}_{\phi}^D(\ty,\O)>0$.
This implies that there exists a constant $C>0$ such that for a sufficiently large $T$ we have that $|B_{t,\phi t}(0)\cap\O|\geq Ct^{N+D}$ for every $t\geq T$.
Now, for $-N<D<s$ we have the following:
\begin{equation}\label{racun_ty}
\begin{aligned}
\zeta_{\ty,\O}(s)&=\int_{B_T(0)^c\cap\O}|x|^{-s-N}\di x=(s+N)\int_T^{+\ty}t^{-s-N-1}|B_{T,t}(0)\cap\O|\di t\\
&\geq (s+N)\int_{\phi T}^{+\ty}t^{-s-N-1}|B_{T,t}(0)\cap\O|\di t\\
&\geq (s+N)\int_{\phi T}^{+\ty}t^{-s-N-1}|B_{\phi^{-1}t,t}(0)\cap\O|\di t\\
&\geq C(s+N)\int_{\phi T}^{+\ty}t^{-s-N-1}(\phi^{-1}t)^{N+D}\di t\\
&=C(s+N)\phi^{-N-D}\frac{T^{D-s}}{s-D}\to+\ty,
\end{aligned} 
\end{equation}
when $s\to D^+$, and this proves part $(c)$ in the case when $D\in(-N,0]$.
For the special case when $D=-N<s$ we proceed in a slightly different manner:
$$
\begin{aligned}
\zeta_{\ty,\O}(s)&=\int_{B_T(0)^c\cap\O}|x|^{D-s}\di x=(s-D)\int_T^{+\ty}t^{D-s-1}|B_{T,t}(0)\cap\O|\di t\\
&\geq (s-D)\int_{T}^{+\ty}t^{D-s-1}|B_{T,\phi^{\lfloor\log_{\phi}(t/T)\rfloor}T}(0)\cap\O|\di t\\
&\geq (s-D)\int_{T}^{+\ty}t^{D-s-1}\sum_{k=0}^{\lfloor\log_{\phi}(t/T)\rfloor-1}|B_{\phi^kT,\phi^{k+1}T}(0)\cap\O|\di t\\
&\geq C(s-D)\int_{T}^{+\ty}t^{D-s-1}\lfloor\log_{\phi}(t/T)\rfloor\di t\\
&\geq C(s-D)\int_{T}^{+\ty}t^{D-s-1}(\log_{\phi}t-\log_{\phi}T-1)\di t=:I.
\end{aligned} 
$$
The last integral appearing above (denoted by $I$) can be explicitly calculated:
$$
\begin{aligned}
I&=\frac{C(s-D)}{\log\phi}\left(\frac{t^{D-s}\log t}{D-s}-\frac{t^{D-s}}{(D-s)^2}\right)\Bigg|_{T}^{+\ty}-C(s-D)\frac{(\log_{\phi}T+1)t^{D-s}}{D-s}\Bigg|_{T}^{+\ty}\\
&=C(s-D)\left(\frac{T^{D-s}}{(s-D)^2\log{\phi}}+\frac{T^{D-s}\log_{\phi}T}{s-D}-\frac{T^{D-s}(\log_{\phi}T+1)}{s-D}\right)\\
&=\frac{C}{\log\phi}\,\frac{T^{D-s}}{s-D}-CT^{D-s}\to +\ty,
\end{aligned} 
$$
when $s\to D^+$, which concludes the proof of the theorem.
\end{proof}

%

The next theorem generalizes \cite[Thm.\ 2]{ra3} for the case when we do not require that $\O$ is of finite Lebesgue measure.

\begin{theorem}[Functional equation for the zeta function at infinity]\label{tubedistinf_inf}
Let $\O$ be a Lebesgue measurable subset of $\eR^N$, $T>0$ fixed and $\phi>1$ fixed.
Then, the functional equation
\begin{equation}\label{tubedistinfeq_inf}
\zeta_{\ty,\Omega}(s;T)=(s+N)\displaystyle\int_T^{+\ty}t^{-s-N-1}|B_{T,t}(0)\cap\O|\di t,
\end{equation}
is valid for all $s\in\Ce$ such that $\re s>\max\{-N,{{\ovb{\dim}}}_B^{\phi}(\ty,\O)\}$, i.e., the integral on the right hand side is absolutely convergent in the open right half-plane $\Pi:=\{\re s>\max\{-N,{{\ovb{\dim}}}_B^{\phi}(\ty,\O)\}\}$ and defines a holomorphic function in that domain.

%
\end{theorem}

\begin{proof}
First, we will show that in the case when $|\O|<\ty$, this is actually a rewriting of \cite[Thm.\ 2]{ra3}.
Namely, $|\O|<\ty$ implies that ${{\ovb{\dim}}}_B^{\phi}(\ty,\O)={{\ovb{\dim}}}_B(\ty,\O)\leq-N$ for every $\phi>1$ (see \cite[Prop.\ 1]{ra2}).
Here we introduce a short-hand notation: ${_t\O}:=B_t(0)^c\cap\O$ for any $t>0$ for the sake of brevity. 
Then, for $\re s>-N$ we note that $-\re s-N<0$ and from \cite[Thm.\ 2]{ra3}, we have
$$
\begin{aligned}
\int_{{_T\O}}&|x|^{-s-N}\di x=T^{-s-N}|{_T\O}|-(s+N)\displaystyle\int_T^{+\ty}t^{-s-N-1}|_t\O|\di t\\
&=T^{-s-N}|{_T\O}|-(s+N)\int_T^{+\ty}t^{-s-N-1}(|{_T\O}|-|B_{T,t}(0)\cap\O|)\di t\\
&=T^{-s-N}|{_T\O}|-|{_T\O}|t^{-s-N}\bigg|_{T}^{+\ty}+(s+N)\int_T^{+\ty}t^{-s-N-1}|B_{T,t}(0)\cap\O|\di t\\
&=(s+N)\int_T^{+\ty}t^{-s-N-1}|B_{T,t}(0)\cap\O|\di t.
\end{aligned}
$$
Again, from \cite[Thm.\ 2]{ra3} we have that the left-hand side above is holomorphic in $\{\re s>-N\}$.
On the other hand, we have that
$$
\left|\int_T^{+\ty}t^{-s-N-1}|B_{T,t}(0)\cap\O|\di t\right|\leq|\O|\int_T^{+\ty}t^{-\re s-N-1}\di t=\frac{|\O|T^{-\re s-N}}{\re s+N}
$$
and by a classical result the above integral defines a holomorphic function on $\{\re s>-N\}$.

It remains to prove the theorem in the case when $|\O|=\ty$ and, in light of Proposition~\ref{>=-N}, we then have that ${{\ovb{\dim}}}_B^{\phi}(\ty,\O)\geq -N$ for every $\phi>1$.

From Proposition~\ref{cor_infint_1} we have that~\eqref{tubedistinfeq_inf} is valid for $\eR\ni\s>{{\ovb{\dim}}}_B^{\phi}(\ty,\O)$ and both integrals are finite.
Furthermore, to show that the equality holds in the half-plane $\{\re s>{{\ovb{\dim}}}_B^{\phi}(\ty,\O)\}$, it suffices to prove that both sides of Equation~\eqref{tubedistinfeq_inf} are holomorphic functions on that domain.\footnote{The equality follows from the fact that two holomorphic functions that coincide on a set that has an accumulation point in their common domain coincide then on the whole common domain.}
We already have that the left-hand side of~\eqref{tubedistinfeq_inf} is holomorphic on the set $\{\re s>{{\ovb{\dim}}}_B^{\phi}(\ty,\O)\}$ according to Theorem~\ref{analiticinf_inf}.
Furthermore, the right-hand side of~\eqref{tubedistinfeq_inf} is a Dirichlet type integral with $\varphi(t)=t^{-s}$ and $\di\mu(t)=t^{-N-1}|B_{T,t}(0)\cap\O|\di t$, and according to \cite[Thm.\ 4(b)]{ra2} it is sufficient to show that the integral on the right hand side of~\eqref{tubedistinfeq_inf} is convergent for $\re s>{{\ovb{\dim}}}_B^{\phi}(\ty,\O)$.

For $\ovb{D}:={{\ovb{\dim}}}_B^{\phi}(\ty,\O)$ and $s\in\Ce$ such that $\re s>\ovb{D}$, let us choose $\e>0$ sufficiently small such that $\re s>\ovb{D}+\e$.
Since ${{\ovb{\M}}}_{\phi}^{\ovb{D}+\e}(\ty,\O)=0$, there exists a constant $C_T>0$ such that $|B_{t,\phi t}(0)\cap\O|\leq C_Tt^{N+\ovb{D}+\e}$ for every $t\in[T,+\ty)$. 
Now we have the following estimate exactly in the same way as in the proof of the second part of Proposition~\ref{cor_infint_1} (by letting $\s=\re s$ and $\s_1=\ovb{D}+\e$ in the notation of that proof, and $K$ being a positive constant):
\begin{equation}
\begin{aligned}
\left|\int_{T}^{+\ty}t^{-s-N-1}|B_{T,t}(0)\cap\O|\di t\right|&\leq\int_{T}^{+\ty}t^{-\re s-N-1}|B_{T,t}(0)\cap\O|\di t\\
&\leq K\phi^{\s_1+N}\sum_{n=0}^{\ty}(\phi^{\ovb{D}+\e-\re s})^n<+\ty.
\end{aligned}
\end{equation}
This, together with the principle of analytic continuation, completes the proof of the theorem.
\end{proof}

Now we state our second main theorem that expands \cite[Thm.\ 6]{ra2} to the case of unbounded sets of infinite Lebesgue measure.

\begin{theorem}[Main theorem B]\label{resdistinf_inf}
Let $\O$ be a Lebesgue measurable set and $\phi>1$ such that ${\dim_B^{\phi}}(\ty,\O)=D>-\ty$ exists.
Furthermore, let $0<{{\unb{\M}}}_{\phi}^D(\ty,\O)\leq{{\ovb{\M}}}_{\phi}^D(\ty,\O)<\ty$.
If $\zeta_{\ty,\O}$ has a meromorphic continuation to a neighborhood of $s=D$, then $D$ is a simple pole.
Furthermore in the case when $D\in[-N,0]$ we have that
\begin{equation}\label{mink_res_inf_inf}
\frac{1}{\phi^{N+D}\log\phi}{{\unb{\M}}}_{\phi}^D(\ty,\O)\leq\res(\zeta_{\ty,\O},D)\leq\frac{1}{\log\phi}{{\ovb{\M}}}_{\phi}^D(\ty,\O) 
\end{equation}
and in the case when $D\in(-\ty,-N)$ we have that
\begin{equation}\label{mink_res_inf_<inf}
-\frac{N+D}{1-\phi^{N+D}}{{\unb{\M}}}_{\phi}^D(\ty,\O)\leq\res(\zeta_{\ty,\O},D)\leq-\frac{N+D}{1-\phi^{N+D}}{{\ovb{\M}}}_{\phi}^D(\ty,\O).
\end{equation}

In addition, if we assume that $\O$ is $\psi$-shell Minkowski measurable at infinity for every $\psi\in(1,\phi)$, we have that
\begin{equation}\label{res_lim}
\res(\zeta_{\ty,\O},D)=\lim_{\psi\to 1^+}\frac{{{{\M}}}_{\psi}^D(\ty,\O)}{\log\psi}.
\end{equation}

\end{theorem}

\begin{proof}
Firstly, by looking at the proof of part $(c)$ of Theorem~\ref{analiticinf_inf} we can conclude that $s=D$ is a singularity of $\zeta_{\ty,\O}$ which is at least a simple pole.
It remains to prove that the order of this pole is not larger than one.
We let
$$
C_T:=\sup_{t\geq T}\frac{|B_{t,\phi t}(0)\cap\O|}{t^{N+D}}
$$
and conclude from ${{\ovb{\M}}}_{\phi}^D(\ty,\O)<+\ty$ that we have $C_T<+\ty$ for $T$ large enough.
Let us first assume that $-N<D\leq 0$ and take $s\in\eR$ such that $s>D$.
From Theorem~\ref{tubedistinf_inf} we then have
$$
\begin{aligned}
\zeta_{\ty,\Omega}(s;T)&=(s+N)\int_{T}^{+\ty}t^{-s-N-1}|B_{T,t}(0)\cap\O|\di t\\
&=(s+N)\sum_{n=0}^{\ty}\int_{\phi^nT}^{\phi^{n+1}T}t^{-s-N-1}|B_{T,t}(0)\cap\O|\di t\\
&\leq(s+N)\sum_{n=0}^{\ty}\int_{\phi^nT}^{\phi^{n+1}T}t^{-s-N-1}|B_{T,\phi^{n+1}T}(0)\cap\O|\di t.\\
\end{aligned}
$$
Furthermore, from the definition of $C_T$ we have
$$
\begin{aligned}
\zeta_{\ty,\Omega}(s;T)&\leq(s+N)\sum_{n=0}^{\ty}\int_{\phi^nT}^{\phi^{n+1}T}t^{-s-N-1}\sum_{k=0}^{n}|B_{\phi^{k}T,\phi^{k+1}T}(0)\cap\O|\di t\\
&\leq (s+N)C_TT^{N+D}\sum_{n=0}^{\ty}\sum_{k=0}^{n}\phi^{k(N+D)}\int_{\phi^nT}^{\phi^{n+1}T}t^{-s-N-1}\di t\\
&=(s+N)C_TT^{N+D}\sum_{n=0}^{\ty}\frac{\phi^{(n+1)(N+D)}-1}{\phi^{N+D}-1}\,\frac{(\phi^{n}T)^{-s-N}-(\phi^{n+1}T)^{-s-N}}{s+N}\\
&=\frac{C_TT^{D-s}(1-\phi^{-s-N})}{\phi^{N+D}-1}\sum_{n=0}^{\ty}(\phi^{(n+1)(N+D)}-1)\phi^{n(-s-N)}\\
\end{aligned}
$$
which can be further bounded by neglecting the $-1$ from the braces above to get 
$$
\begin{aligned}
\zeta_{\ty,\Omega}(s;T)&\leq\frac{C_TT^{D-s}(1-\phi^{-s-N})}{\phi^{N+D}-1}\sum_{n=0}^{\ty}\phi^{(n+1)(N+D)+n(-s-N)}\\
&=\frac{C_TT^{D-s}(1-\phi^{-s-N})\phi^{N+D}}{\phi^{N+D}-1}\sum_{n=0}^{\ty}\phi^{n(D-s)}=\frac{C_TT^{D-s}(\phi^{N+D}-\phi^{D-s})}{(\phi^{N+D}-1)(1-\phi^{D-s})}.
\end{aligned}
$$
From this we conclude that $|\zeta_{\ty,\O}(s;T)|\leq C(1-\phi^{D-s})^{-1}$ where $C>0$ is a positive constant independent of $s$ and $T$ which implies that $s=D$ is a pole of order at most one, i.e., a simple pole.
Since changing $T$ amounts to adding an entire function to $\zeta_{\ty,\O}(\,\cdot\,;T)$; see \cite[Section 3]{ra2}, we conclude that the residue at $s=D$ of $\zeta_{\ty,\O}(\,\cdot\,;T)$ is independent of $T$ and for $s>D$ we have that
$$
(s-D)\zeta_{\ty,\O}(s)\leq\frac{C_TT^{D-s}(\phi^{N+D}-\phi^{D-s})}{\phi^{N+D}-1}\,\frac{s-D}{1-\phi^{D-s}}
$$
which, by letting $s\to D^+$, yields
$$
\res(\zeta_{\ty,\O},D)\leq\frac{C_T}{\log\phi}.
$$
Finally, by taking the limit as $T\to\ +\ty$ we get
$$
\res(\zeta_{\ty,\O},D)\leq\frac{1}{\log\phi}{{\ovb{\M}}}_{\phi}^D(\ty,\O).
$$
For the inequality involving the lower $\phi$-shell Minkowski content at infinity, we define
$$
K_T:=\inf_{t\geq T}\frac{|B_{t,\phi t}(0)\cap\O|}{t^{N+D}}
$$
and conclude from ${{\unb{\M}}}_{\phi}^D(\ty,\O)>0$ that we have $K_T>0$ for $T$ large enough.
Furthermore, we take $s>D$ and proceed in a similar manner as before:
$$
\begin{aligned}
\zeta_{\ty,\Omega}(s;T)&=(s+N)\sum_{n=0}^{\ty}\int_{\phi^nT}^{\phi^{n+1}T}t^{-s-N-1}|B_{T,t}(0)\cap\O|\di t\\
&\geq(s+N)\sum_{n=0}^{\ty}\int_{\phi^nT}^{\phi^{n+1}T}t^{-s-N-1}|B_{T,\phi^{n}T}(0)\cap\O|\di t\\
&=(s+N)\sum_{n=0}^{\ty}\int_{\phi^nT}^{\phi^{n+1}T}t^{-s-N-1}\sum_{k=0}^{n-1}|B_{\phi^{k}T,\phi^{k+1}T}(0)\cap\O|\di t.\\
\end{aligned}
$$
Similarly as before, from the definition of $K_T$ we get:
$$
\begin{aligned}
\zeta_{\ty,\Omega}(s;T)&\geq (s+N)K_TT^{N+D}\sum_{n=0}^{\ty}\sum_{k=0}^{n-1}\phi^{k(N+D)}\int_{\phi^nT}^{\phi^{n+1}T}t^{-s-N-1}\di t\\
&=(s+N)K_TT^{N+D}\sum_{n=0}^{\ty}\frac{\phi^{n(N+D)}-1}{\phi^{N+D}-1}\,\frac{(\phi^{n}T)^{-s-N}-(\phi^{n+1}T)^{-s-N}}{s+N}\\
&=\frac{K_TT^{D-s}(1-\phi^{-s-N})}{\phi^{N+D}-1}\sum_{n=0}^{\ty}(\phi^{n(N+D)}-1)\phi^{n(-s-N)}.\\
\end{aligned}
$$
Interchanging summation and subtraction above yields
$$
\begin{aligned}
\zeta_{\ty,\Omega}(s;T)&\geq\frac{K_TT^{D-s}(1-\phi^{-s-N})}{\phi^{N+D}-1}\left(\sum_{n=0}^{\ty}\phi^{n(D-s)}-\frac{1}{1-\phi^{-s-N}}\right)\\
&=\frac{K_TT^{D-s}(1-\phi^{-s-N})}{(\phi^{N+D}-1)(1-\phi^{D-s})}-\frac{K_TT^{D-s}}{(\phi^{N+D}-1)}.
\end{aligned}
$$
This implies that
$$
(s-D)\zeta_{\ty,\O}(s)\geq \frac{K_TT^{D-s}(1-\phi^{-s-N})}{\phi^{N+D}-1}\,\frac{s-D}{1-\phi^{D-s}}-\frac{K_TT^{D-s}(s-D)}{(\phi^{N+D}-1)},
$$
and by letting $s\to D^+$ we have
$
\res(\zeta_{\ty,\O})\geq\frac{K_T}{\phi^{N+D}\log\phi}.
$
Finally, we let $T\to +\ty$ to get
$
\res(\zeta_{\ty,\O},D)\geq\frac{1}{\phi^{N+D}\log\phi}{{\unb{\M}}}_{\phi}^D(\ty,\O).
$

Let us now treat the special case when $D=-N$.
We take $s\in\eR$ such that $s>D$ and, similarly as before, we have
\begin{equation}\nonumber
\begin{aligned}
\zeta_{\ty,\O}(s)&\leq (s+N)\int_T^{+\ty}t^{-s-N-1}\sum_{n=0}^{\lfloor\log_\phi(t/T)\rfloor}|B_{\phi^nT,\phi^{n+1}T}\cap\O|\di t\\
&\leq C_T(s+N)\int_T^{+\ty}t^{-s-N-1}\sum_{n=0}^{\lfloor\log_\phi(t/T)\rfloor}\phi^{n(N+D)}\di t\\
&= C_T(s+N)\int_T^{+\ty}t^{-s-N-1}(\lfloor\log_\phi(t/T)\rfloor+1)\di t\\
&\leq \frac{C_T(s+N)}{\log\phi}\int_T^{+\ty}t^{-s-N-1}\log t\di t=\frac{C_TT^{-s-N}}{(s+N)\log\phi}(N\log T+s\log T+1).
\end{aligned}
\end{equation}
From this, we conclude that
$$
\res(\zeta_{\ty,\O},-N)\leq \frac{C_T}{\log\phi},
$$
and, by letting $T\to +\ty$ we get the desired inequality.
For the other inequality we have the following estimates:
\begin{equation}\nonumber
\begin{aligned}
\zeta_{\ty,\O}(s)&\geq (s+N)\int_T^{+\ty}t^{-s-N-1}\sum_{n=0}^{\lfloor\log_\phi(t/T)\rfloor-1}|B_{\phi^nT,\phi^{n+1}T}\cap\O|\di t\\
&\geq K_T(s+N)\int_T^{+\ty}t^{-s-N-1}\sum_{n=0}^{\lfloor\log_\phi(t/T)\rfloor-1}\phi^{n(N+D)}\di t\\
&\geq K_T(s+N)\int_T^{+\ty}t^{-s-N-1}(\log_\phi(t/T)-1)\di t\\
&\geq \frac{K_T(s+N)}{\log\phi}\int_T^{+\ty}t^{-s-N-1}\log t\di t-K_T(s+N)(1+\log_\phi T)\int_T^{+\ty}t^{-s-N-1}\di t\\
&=\frac{K_TT^{-s-N}}{(s+N)\log\phi}(N\log T+s\log T+1)-K_T(1+\log_\phi T){T^{-s-N}}.
\end{aligned}
\end{equation}
Finally, as before, we first multiply both sides by $(s+N)$, let $s\to -N^+$ and then let $T\to +\ty$.

It remains to prove the theorem in the case when $D\in(-\ty,-N)$.
The argumentation will be similar as before but we will assume that $D<s<-N$ and use~\eqref{tubedistinfeq_inf} to get
\begin{equation}\nonumber
\begin{aligned}
\zeta_{\ty,\O}(s)&\leq T^{-s-N}|B_T(0)^c\cap\O|-(s+N)\int_T^{+\ty}t^{-s-N-1}|B_{t}(0)^c\cap\O|\di t\\
&\leq -(s+N)\int_T^{+\ty}t^{-s-N-1}\sum_{n=0}^{\ty}|B_{\phi^nt,\phi^{n+1}t}(0)\cap\O|\di t\\
&\leq -C_T(s+N)\int_T^{+\ty}t^{-s-N-1}\sum_{n=0}^{\ty}t^{N+D}\phi^{n(N+D)}\di t\\
&= -\frac{C_T(s+N)}{1-\phi^{N+D}}\int_T^{+\ty}t^{D-s-1}\di t=-\frac{C_T(s+N)}{1-\phi^{N+D}}\,\frac{T^{D-s}}{s-D}.
\end{aligned}
\end{equation}
Exactly as before, we multiply both sides by $(s-D)$, let $s\to D^+$ and, after that, let $T\to+\ty$.
To get the other inequality, and conclude the proof, we proceed in a similar manner by using the lower $\phi$-shell Minkowski content of $\O$ at infinity.  

It remains to prove the last assertion of the theorem when we assume additionally that $\O$ is $\psi$-shell Minkowski measurable for every $\psi\in(0,\phi)$.
Note that \eqref{mink_res_inf_inf}  implies that the limit $\lim_{\psi\to 1^+}{{{{\M}}}_{\psi}^D(\ty,\O)}/{\log\psi}$ exists and this resolves the case when $D\in[-N,0]$.
It remains only to see that~\eqref{res_lim} holds even in the case when $D\in(-\ty,-N)$, but this is a simple consequence of L'Hospital's rule and Equation \eqref{mink_res_inf_<inf}:
$$
\begin{aligned}
\lim_{\psi\to 1^+}\frac{{{{\M}}}_{\psi}^D(\ty,\O)}{\log\psi}&=\lim_{\psi\to 1^+}\frac{1-\psi^{N+D}}{-(N+D)}\,\frac{\res(\zeta_{\ty,\O},D)}{\log\psi}\\&=\res(\zeta_{\ty,\O},D)\lim_{\psi\to 1^+}\frac{-(N+D)\psi^{N+D-1}}{-(N+D)\psi^{-1}}=\res(\zeta_{\ty,\O},D).\\
\end{aligned}
$$ 
\end{proof}

\section{Quasiperiodic sets at infinity of infinite Lebesgue measure}
\label{sec:quasi}

In this section we construct quasiperiodic sets at infinity, similarly as it was done in \cite{ra3} but, here they will have infinite Lebesgue measure and hence we will use the $\phi$-shell Minkowski dimension in order to describe their fractality.

We recall the two parameter set $\Omega_{\ty}^{(a,b)}$ from \cite[Section 5]{ra2}.
\begin{definition}\label{Omega(a,b,ty)}
For $a\in(0,1/2)$ and $b\in(1+\log_{1/a}2,+\ty)$ we 
define a countable family of sets
$$
\O_{m}^{(a,b)}:=\{(x,y)\in\eR^2\,:\,x>a^{-m},\ 0<y<x^{-b}\},\quad m\geq 1.
$$
The set $\O_{\ty}^{(a,b)}$ is then constructed by ``stacking'' the translated images of the sets $\O_{m}^{(a,b)}$ along the $y$-axis on top of each other.
More precisely, for each $m\geq 1$ we take $2^{m-1}$ copies of $\O_{m}^{(a,b)}$ and arrange all of these sets by vertical translations so that they are pairwise disjoint and lie in the strip $\{0\leq y\leq S\}$ where
$S$ is the sum of all the widths of all the sets in the union, i.e., 
$
S=\sum_{m=1}^{\ty}2^{m-1}\cdot(a^{-m})^{-b}=\frac{a^{b}}{1-2a^{b}};
$
see Figure~\ref{Cantor_u_besk}.
Hence, $\O_{\ty}^{(a,b)}$ is the disjoint union of all of these sets.
\end{definition}

Note that in the above definition, the assumption on the parameter $a$ guarantees that $\O_{\ty}^{(a,b)}$ lies in a strip of finite height while the assumption on the parameter $b$ guarantees that $\O_{\ty}^{(a,b)}$ has finite Lebesgue measure.

We will now remove the restriction on $b$ and let $b\in(\log_{1/a}2,1+\log_{1/a}2]$ so that the set $\Omega_{\ty}^{(a,b)}$ will be of infinite Lebesgue measure \cite[Rem.\ 6]{ra2} but it will still be contained in a strip $\{0\leq y\leq S\}$ of finite height $S=\frac{a^b}{1-2a^b}$.
By going carefully through \cite[Example 6]{ra2} one can see that the condition on $\Omega_{\ty}^{(a,b)}$ being of finite Lebesgue measure is not really needed and thus we obtain the following result which generalizes Example \cite[Example 6]{ra2} to sets of infinite Lebesgue measure.

\begin{example}\label{tw-param-inf}
Let $\O_{\ty}^{(a,b)}$ be the two parameter set from \cite[Section 5]{ra2}; see Figure \ref{Cantor_u_besk}, but of infinite Lebesgue measure; that is, with $a\in(0,1/2)$ and $b\in(\log_{1/a}2,1+\log_{1/a}2]$.
Then, its distance zeta function at infinity calculated via the $|\cdot|_{\ty}$-norm on $\eR^2$ is given by
\begin{equation}
\zeta_{\ty,\O_{\ty}^{(a,b)}}(s;|\cdot|_{\ty})=\frac{1}{s+b+1}\cdot\frac{1}{a^{-(s+b+1)}-2}.
\end{equation}
It is meromorphic on $\Ce$ where the set of complex dimensions of $\O_{\ty}^{(a,b)}$ at infinity visible through $W:=\{\re s>\log_{1/a}-b-3\}$ is given by\footnote{We define the complex dimensions of sets at infinity with infinite Lebesgue measure in the same way as for sets of finite Lebesgue measure.}
\begin{equation}\label{twop_set_ty}
\{-(b+1)\}\cup\left(\log_{1/a}2-(b+1)+\frac{2\pi}{\log (1/a)}\I\Ze\right).
\end{equation}
Furthermore, we also have that for any $\phi>1$,
\begin{equation}\label{fidim}
{{\ovb\dim}_{B}^{\phi}}(\ty,\O_{\ty}^{(a,b)})=\log_{1/a}2-(b+1).
\end{equation}
\end{example}

We have already discussed the first part of the example. Furthermore, by going carefully through \cite[Def.\ 1 and Thm.\ 11]{ra3} one can see that the assumption of $\O$ having finite Lebesgue measure is not really needed there. 
Hence, we conclude that \eqref{twop_set_ty} holds in the same way as in \cite[Section 5]{ra2}.
Finally, \eqref{fidim} follows from the fact that the upper $\phi$-shell Minkowski dimension does not depend on the choice of $\phi$ (see Proposition \ref{phi_independent}) and from Theorem \ref{analiticinf_inf} part $(b)$.

\begin{figure}[h]
\begin{center}
\includegraphics[width=10cm,height=5cm]{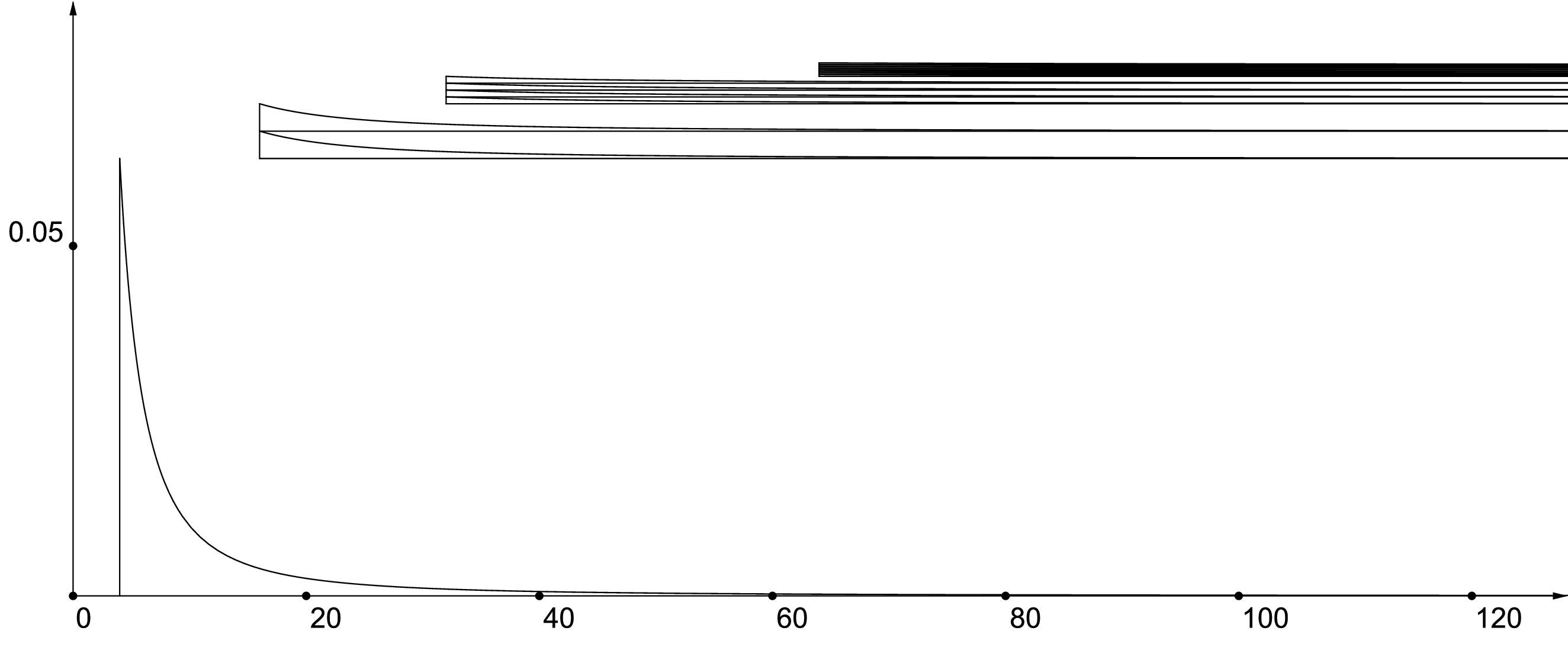}
\end{center}
\caption{The two parameter set $\O_{\ty}^{(a,b)}$ with the first four steps of the construction of the set $\O_{\ty}^{(a,b)}$ depicted. The idea of the construction is to stack copies of the sets $
\O_{m}^{(a,b)}:=\{(x,y)\in\eR^2\,:\,x>a^{-m},\ 0<y<x^{-b}\},\ m\in\eN
$ where for each $m\in\eN$ we stack $2^{m-1}$ copies of $
\O_{m}^{(a,b)}$; see \cite[Section 5]{ra2}.}
\label{Cantor_u_besk}
\end{figure}

One can now construct quasiperiodic sets of infinite Lebesgue measure at infinity by using the two parameter sets from Example \ref{tw-param-inf} as building blocks analogously as was done in \cite[Section 5]{ra3} to construct them in the case of finite Lebesgue measure. The same is true for the maximally hyperfractal case. Here we only briefly summarize the results.

Recall that we call an unbounded set of finite Lebesgue measure {\em maximally hyperfractal at infinity} if its distance zeta function at infinity has the critical line $\{\re s=\ov{\dim}_B(\infty,\O)\}$ as its natural boundary \cite{ra3}.
We generalize this definition for any unbounded set by replacing $\ov{\dim}_B(\infty,\O)$ by $\ov{\dim}^{\phi}_B(\infty,\O)$ for some arbitrary $\phi >1$ (the upper $\phi$-shell Minkowski dimension at infinity does not depend on $\phi$).

\begin{theorem}[Existence of maximally hyperfractal sets at infinity]
 For any $D \leq 0$, there exists a set $\O\subseteq\eR^2$ which is maximally hyperfractal at infinity with $\ov{\dim}^{\phi}_B(\infty,\O)=D$ and is Minkowski nondegenerate at infinity.	
\end{theorem}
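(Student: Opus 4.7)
The plan is to follow the blueprint of the proof of the analogous result in \cite[Section 5]{ra3} for the finite Lebesgue measure case, with the two-parameter sets $\O_{\ty}^{(a,b)}$ from Proposition \ref{tw-param-inf} (and, for values of $D$ outside the range those cover, the finite measure two-parameter sets from \cite[Section 5]{ra2}) serving as building blocks. Each such block has $\phi$-shell Minkowski dimension $D(a,b):=\log_{1/a}2-(b+1)$ and contributes an arithmetic progression of simple poles of its distance zeta function at infinity on the critical line $\{\re s=D(a,b)\}$ with common spacing $p(a):=2\pi/\log(1/a)$.

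Fix $D\leq 0$. First I would pick a sequence of parameter pairs $(a_k,b_k)_{k\geq 1}$ with $D(a_k,b_k)=D$ for every $k$, chosen so that the spacings $\{p(a_k)\}_{k\geq 1}$ form a dense subset of $(0,+\ty)$. The full would-be pole set $\bigcup_k (D+\I p(a_k)\Ze)$ is then dense on the common critical line $\{\re s=D\}$. Next I would assemble the ambient set $\O\subseteq\eR^2$ as a disjoint union of dilated and translated copies $\O_k=\lambda_k\O_{\ty}^{(a_k,b_k)}+\vec{c}_k$ placed into pairwise disjoint annuli $B_{T_k,T_{k+1}}(0)$ with $T_k\to+\ty$. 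Using the scaling behavior of $|x|^{-s-2}\di x$ under dilations together with the separation of supports, one obtains a decomposition of the form
\begin{equation*}
\zeta_{\ty,\O}(s)=\sum_{k=1}^{\ty}\lambda_k^{s+2}\,\zeta_{\ty,\O_{\ty}^{(a_k,b_k)}}(s;|\cdot|_{\ty})+E(s),
\end{equation*}
with $E$ holomorphic on a right half-plane containing $\{\re s\geq D\}$; choosing $\lambda_k$ to decay sufficiently fast (e.g.\ geometrically) then makes the series meromorphic on $\{\re s>D\}$ with pole set exactly $\bigcup_k (D+\I p(a_k)\Ze)$. Density of this set on the critical line forces $\{\re s=D\}$ to be a natural boundary of $\zeta_{\ty,\O}$, so $\O$ is maximally hyperfractal at infinity.

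The main obstacle will be the simultaneous calibration of the scaling factors $\lambda_k$ and the positions $T_k$ so that three conditions hold together: the series above converges absolutely on compacta away from the poles and no residue cancellations occur (so that the density of poles on $\{\re s=D\}$ survives in the sum); the upper $\phi$-shell Minkowski dimension of $\O$ at infinity is exactly $D$, not something strictly larger generated by the superposition (using the independence from $\phi$ in Proposition \ref{phi_independent}); and the lower $\phi$-shell Minkowski content ${\unb{\M}}_{\phi}^D(\ty,\O)$ is strictly positive, guaranteeing Minkowski nondegeneracy at infinity (using the residue-to-content estimates in Theorem \ref{resdistinf_inf}). Exactly as in \cite[Thm.\ 12]{ra3}, this is achieved by choosing both $T_k$ and $\lambda_k$ to be lacunary geometric sequences with carefully matched exponents, so that at every scale $t\in[T_k,T_{k+1}]$ the shell $B_{t,\phi t}(0)$ captures a controlled, nonvanishing portion of a single block $\O_k$; our Theorem \ref{resdistinf_inf}, which plays the role that \cite[Thm.\ 6]{ra2} played in the finite measure case, together with Proposition \ref{phi_independent}, then provides exactly the tools needed to verify nondegeneracy and the correct value of the dimension in the infinite measure setting.
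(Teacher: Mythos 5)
Your overall strategy — using the two-parameter sets $\O_{\ty}^{(a,b)}$ from Proposition \ref{tw-param-inf} as building blocks, choosing parameters so that the pole spacings $p(a_k)=2\pi/\log(1/a_k)$ are dense in $(0,+\ty)$, taking a disjoint union of scaled copies, and calibrating scales and positions so the union of arithmetic progressions of poles is dense on the critical line $\{\re s = D\}$ — is exactly the blueprint from \cite[Section 5]{ra3} that the paper intends to adapt, and you correctly identify the substitutions needed in the infinite measure setting (Theorem \ref{resdistinf_inf} in place of \cite[Thm.\ 6]{ra2}, Proposition \ref{phi_independent} for $\phi$-independence).

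However, there are two concrete gaps. First, the geometry: you propose to place the dilated, translated copies $\O_k=\lambda_k\O_{\ty}^{(a_k,b_k)}+\vec{c}_k$ "into pairwise disjoint annuli $B_{T_k,T_{k+1}}(0)$." Every $\O_{\ty}^{(a,b)}$ is an \emph{unbounded} set — it extends to $x=+\ty$ — so a dilated and translated copy of it cannot fit inside any bounded annulus. The correct geometry, as the figure caption in the paper for $\O_{\ty}^{(a,b)}$ already suggests, is to exploit the fact that each block is contained in a horizontal strip $\{0\le y\le S_k\}$ of finite width $S_k = a_k^{b_k}/(1-2a_k^{b_k})$, and to stack the copies in pairwise disjoint horizontal strips. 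This requires $\sum_k\lambda_k S_k<\ty$ if one wants the whole construction to remain in a strip, and it makes the separation of supports automatic without introducing the extra holomorphic error term $E(s)$ (the zeta function at infinity is then a clean sum).

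Second, the dimension range: the formula $D(a,b)=\log_{1/a}2-(b+1)$ with $a\in(0,1/2)$ gives $\log_{1/a}2\in(0,1)$, so the infinite-measure branch $b\in(\log_{1/a}2,\,1+\log_{1/a}2]$ yields $D\in[-2,-1)$, while the finite-measure branch $b>1+\log_{1/a}2$ yields $D<-2$. The building blocks you propose therefore realize only $D\in(-\ty,-1)$, and since dilation preserves the $\phi$-shell Minkowski dimension at infinity, dilation does not help. Your construction as written does not produce any set with $D\in[-1,0]$, yet the theorem asserts existence for all $D\le 0$. A modified stacking scheme or a different family of building blocks is needed to reach $[-1,0]$; this is not addressed in your proposal.

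A minor point: the scaling of the distance zeta function at infinity under a dilation $\O\mapsto\lambda\O$ is $\zeta_{\ty,\lambda\O}(s;T)=\lambda^{-s}\zeta_{\ty,\O}(s;T/\lambda)$ (substituting $x=\lambda y$ in $\int|x|^{-s-N}\di x$), not $\lambda^{s+2}$. This changes the sign of the exponent and hence whether you want $\lambda_k$ to decay or grow in your calibration; worth double-checking once the geometry is fixed.
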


By adapting the definition of a quasiperiodic set at infinity as follows (compare to \cite[Def.\ 2]{ra3}) we can also construct algebraically and transcendentally quasiperiodic sets at infinity of any dimension $D\leq 0$.

\begin{definition}[General quasiperiodic set at infinity]\label{quasiperiodic}
Let $\O\subseteq\eR^N$ be Lebesgue measurable (possibly of infinite Lebesgue measure) and such that for some fixed  $D\leq 0$ it satisfies the following asymptotics:\footnote{Under these conditions it follows directly that then $\dim_B^{\phi}(\ty,\O)=D$, $\unb{\M}^D_{\phi}(\ty,\O)=\phi^N\liminf_{\tau\to+\ty} G(\tau)$ and $\ovb{\M}^{D}_{\phi}(\ty,\O)=\phi^N\limsup_{\tau\to+\ty} G(\tau)$ for any $\phi>1$.}
\begin{equation}\label{Aasymp}
|B_{T,t}(0)\cap\O| =t^{N+D}(G(\log t)+o(1))\q\textrm{as}\q t\to +\ty,
\end{equation}
where $G(\tau)\geq0$ for all $\tau\in\mathbb{R}$ and satisfies $$0<\liminf_{\tau\to+\ty} G(\tau)\le\limsup_{\tau\to+\ty} G(\tau)<+\ty.$$

We then call $\O$ an {\em algebraically or transcendentally $n$-quasiperiodic set}\label{quasiperiodicdef}
{\em at infinity} if $G=G(\tau)$ is an algebraically or transcendentally $n$-quasiperiodic function, with $n\in \{2,3,\ldots\}\cup\{\infty\}$, respectively, in the following sense \cite[Def.\ 4.6.6]{fzf}.

A function $G=G(\tau)\colon\eR\to\eR$ is said to be {\em $n$-quasiperiodic} with $n<\infty$ if it is of the form
$$
G(\tau)=H(\tau,\ldots,\tau),
$$
where for some $n\geq 2$, $H\colon\eR^{n}\to\eR$ is a nonconstant $T_k$-periodic function in its $k$-th component, for each $k=1,\ldots,n$, and the corresponding periods $T_1,\ldots,T_n$ are rationally independent.
The values $T_k$ are called the {\em quasiperiods of $G$}.

In addition, we say that a function $G=G(\tau)$ is

\smallskip

$(a)$ {\em transcendentally $n$-quasiperiodic} if the
periods $T_1,\dots, T_n$ are algebraically independent, i.e., 
all of the quotients $T_i/T_j$, for $i\ne j$, are transcendental numbers.

$(b)$ {\em algebraically $n$-quasiperiodic}
if the corresponding periods $T_1,\dots, T_n$ are algebraically dependent, i.e., there exist algebraic numbers $\g_1,\dots,\g_n$, not all of them zero, such that $\g_1 T_1+\dots+\g_n T_n=0$.

Finally, a function $G:\eR\to\eR$ is said to be {\em $\ty$-quasiperiodic},
if it is of the form
$$
G(\tau)=H(\tau,\tau,\dots),
$$
where $H:\eR_b^\ty\to\eR$,\footnote{$\eR_b^\ty$ stands here for the usual Banach space\label{banach_bdd} of bounded sequences $(\tau_j)_{j\ge1}$ of real numbers, endowed with
the norm $\|(\tau_j)_{j\ge1}\|_\ty:=\sup_{j\ge1}|\tau_j|$.}
 $H=H(\tau_1,\tau_2,\dots)$ is a function which is $T_j$-periodic in its $j$-th component, for each $j\in\eN$,
with $T_j>0$ as minimal periods, and such that the set of periods 
$
\{T_j:j\ge1\}
$
is {\em rationally} independent.
We make the same distinction between transcendental and algebraic $\infty$-quasiperiodicity as in the case of finite number of quasiperiods.

%
%
%
%
We denote the families of algebraically and transcendentally $n$-quasiperiodic sets at infinity, by $\mathscr{D}_{aqp}^{\ty}(n)$ and $\mathscr{D}_{tqp}^{\ty}(n)$, respectively. 
\end{definition}

It is easy to check that the above generalized definition is compatible with \cite[Def.\ 2]{ra3} when $\Omega$ has finite Lebesgue measure.

\begin{theorem}[Existence of general quasiperiodic sets at infinity]
	The families $\mathscr{D}_{aqp}^{\ty}(n)$ and $\mathscr{D}_{tqp}^{\ty}(n)$ are nonempty for all $n\in\{2,3,\ldots\}\cup\{\infty\}$. More specifically, for any $D\leq 0$, there exist $\O_{aqp}\in\mathscr{D}_{aqp}^{\ty}(n)$ and $\O_{aqp}\in\mathscr{D}_{tqp}^{\ty}(n)$ with $\dim_B^{\phi}=D$ (for any $\phi>1$).
	Furthermore, in case $D<-N$ these sets are (necessarily) of finite Lebesgue measure, and else, of infinite Lebesgue measure. Moreover, in both cases they are $\phi$-shell Minkowski nondegenerate.
\end{theorem}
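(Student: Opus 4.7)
The plan is to mimic the construction of \cite[Section 5]{ra3} using the two-parameter sets $\O_{\ty}^{(a,b)}$ from Proposition \ref{tw-param-inf} as building blocks. For a prescribed target $D\leq 0$, pick $n$ parameter pairs $(a_i,b_i)$ satisfying
\begin{equation}
\log_{1/a_i}2-(b_i+1)=D,\q i=1,\ldots,n,
\end{equation}
so that by Proposition \ref{tw-param-inf} each block $\O_{\ty}^{(a_i,b_i)}$ has $\phi$-shell Minkowski dimension equal to $D$ and is $\phi$-shell Minkowski nondegenerate at infinity. Choose $b_i\in(1+\log_{1/a_i}2,+\ty)$ when $D<-N$ (each block then having finite Lebesgue measure), and $b_i\in(\log_{1/a_i}2,1+\log_{1/a_i}2]$ when $-N\leq D\leq 0$ (each block then having infinite Lebesgue measure contained in a strip of finite width, as noted just before Proposition \ref{tw-param-inf}). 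Finally, arrange the logarithmic periods $\tau_i:=\log(1/a_i)$ to be linearly independent over $\Qu$ (for the algebraic case) or algebraically independent over $\Qu$ (for the transcedental case); for $n=\ty$ pick a countable sequence $(\tau_i)$ with the same independence property.

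Assemble $\O\subseteq\eR^2$ as the disjoint union of (horizontally shifted or vertically stacked) copies of the blocks $\O_{\ty}^{(a_i,b_i)}$, placed in $n$ pairwise disjoint strips so that for $t$ large enough, $B_{T,t}(0)\cap\O$ decomposes as the disjoint union $\bigsqcup_i B_{T,t}(0)\cap\O_{\ty}^{(a_i,b_i)}$ up to an error whose volume is $o(t^{N+D})$ accounting for the strip separators. From the explicit dyadic self-similarity of the construction of $\O_{\ty}^{(a,b)}$ described in \cite[Section 5]{ra2} (see Figure \ref{Cantor_u_besk}) one extracts the periodic asymptotic
\begin{equation}\label{single_block_plan}
|B_{T,t}(0)\cap\O_{\ty}^{(a_i,b_i)}|=t^{N+D}\bigl(G_i(\log t)+o(1)\bigr)\q\textrm{as}\q t\to+\ty,
\end{equation}
where $G_i$ is continuous, strictly positive and bounded, with period $\tau_i$. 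Summing \eqref{single_block_plan} over $i$ yields the asymptotic of Definition \ref{quasiperiodic} with
\begin{equation}
G(\tau)=\sum_{i=1}^{n}G_i(\tau).
\end{equation}
Since the $\{\tau_i\}$ are rationally (resp., algebraically) independent over $\Qu$, the function $G$ is algebraically (resp., transcedentally) $n$-quasiperiodic in the sense of \cite[Def.\ 4.6.6]{fzf}, and the positivity and boundedness of each $G_i$ force $0<\liminf_{\tau\to+\ty}G(\tau)\leq\limsup_{\tau\to+\ty}G(\tau)<+\ty$. Invoking the footnote in Definition \ref{quasiperiodic} gives $\dim_B^{\phi}(\ty,\O)=D$ together with $\phi$-shell Minkowski nondegeneracy, so $\O\in\mathscr{D}_{aqp}^{\ty}(n)$ or $\mathscr{D}_{tqp}^{\ty}(n)$ as desired, and the measure dichotomy is automatic from the choice of the $b_i$ above.

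The main obstacle is the derivation of the periodic asymptotic \eqref{single_block_plan} for an infinite-measure block, since in \cite[Section 5]{ra3} one could work directly with the tube function of $\O_{\ty}^{(a,b)}$ which had finite total mass. In our setting the relevant object is the $\phi$-shell $|B_{T,t}(0)\cap\O_{\ty}^{(a_i,b_i)}|$: one must track carefully how the $2^{m-1}$-fold stack of the ``teeth'' $\O_m^{(a_i,b_i)}$ at the geometric scales $a_i^{-m}$ intersects the annulus $B_{T,t}(0)\setminus B_T(0)$ as $t$ sweeps across these scales, in order to extract the periodic function $G_i$ of period $\tau_i=\log(1/a_i)$ together with an $o(t^{N+D})$ correction. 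Proposition \ref{phi_independent} then guarantees that the leading term of the asymptotic is (up to an overall factor) independent of $\phi>1$, and Proposition \ref{>=-N} confirms that the resulting dimension lies in $[-N,0]$ when appropriate. Once \eqref{single_block_plan} is established, the remaining combinatorial quasiperiodicity argument is a direct transposition of the one in \cite[Section 5]{ra3}.
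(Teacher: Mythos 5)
Your overall strategy is the right one and matches the construction the paper has in mind: take $n$ copies of the two-parameter sets $\O_{\ty}^{(a_i,b_i)}$ with a common $\phi$-shell dimension $D$ and rationally (resp.\ algebraically) independent logarithmic periods $\tau_i=\log(1/a_i)$, stack them in disjoint strips, and sum the resulting periodic contributions to produce the quasiperiodic function $G$. The measure dichotomy from the choice of $b_i$ and the appeal to the footnote of Definition~\ref{quasiperiodic} for the nondegeneracy are also fine, and you correctly identify the crux as deriving the single-block asymptotic~\eqref{single_block_plan}.

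However, you have overlooked the one subtlety the paper explicitly flags and which is precisely where~\eqref{single_block_plan} breaks down as you have stated it. The self-similar, dyadic stacking in~\cite[Section~5]{ra2} produces an \emph{exactly} periodic profile $G_i$ only when the shell function is computed with the $|\cdot|_\ty$-metric balls $K_t(0)$, because the $\ty$-balls are rectangles and intersect the teeth $\O_m^{(a_i,b_i)}$ self-similarly across the geometric scales $a_i^{-m}$. With Euclidean balls $B_{T,t}(0)$ the circular arcs cut through the rectangular teeth in a way that is \emph{not} scale-invariant, so the function $t\mapsto t^{-N-D}|B_{T,t}(0)\cap\O_{\ty}^{(a_i,b_i)}|$ is not asymptotically periodic in $\log t$ without a further argument. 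The paper resolves this by (i) proving the general statement with the sup-norm shells $K_T(0)\setminus K_t(0)$ substituted into Definition~\ref{quasiperiodic}, and (ii) observing that the asymptotics of $|B_{T,t}(0)\cap\O|$ and of the sup-norm shell volume coincide up to first order only when $D\in(-N+1,0]$, so that the Euclidean-ball version of the claim is obtained exactly as stated only in that restricted range. Your proposal asserts the periodic asymptotic~\eqref{single_block_plan} directly for Euclidean shells and for all $D\leq 0$, which is the step that fails; you would need to either prove the result in the $|\cdot|_\ty$-metric and then perform the comparison, or confine yourself to $D\in(-N+1,0]$.

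A secondary issue: from $D=\log_{1/a}2-(b+1)$ with the constraint $b>\log_{1/a}2$ (needed for the stacked set to lie in a strip of finite width), each block has $D<-1$, so the building blocks you describe in $\eR^2$ do not by themselves cover $D\in[-1,0]$; some further adjustment (higher ambient dimension or a modified block) is needed to reach the full range $D\leq 0$ claimed in the theorem. Finally, a small slip in terminology: for the algebraically quasiperiodic case one needs the periods linearly independent over $\Qu$ \emph{with algebraic ratios}, and for the transcendentally quasiperiodic case one needs at least one transcendental ratio (not merely linear independence over $\Qu$); see \cite[Def.~4.6.6]{fzf}.
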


We omit the proof since it closely follows the construction from \cite[Section 5]{ra3} by adapting to new definitions and removing the assumption of $\Omega$ being of finite Lebesgue measure. We emphasize also that the general statement is in fact proven when in Definition \ref{quasiperiodic} we replace $B_{T,t}(0)$ by $K_{T}(0)\setminus K_t(0)$ with $K_t(0)$ being the ball of radius $t$ centered at zero in the $|\cdot|_\infty$-metric on $\eR^N$.

Similarly as in \cite[Section 5]{ra3} one can show that if we restrict $D\in (-N+1,0]$, the asymptotics of $|B_{T,t}(0)\cap\Omega|$ and $|(K_T(0)\setminus K_t(0))\cap\Omega|$ coincide up to the first order. Hence in this case we obtain also general quasiperiodic sets exactly in the sense of Definition \ref{quasiperiodic}.

\section{Concluding remarks and perspectives}

In this final section we give remarks on the connection between the notions of ($\phi$-shell) Minkowski content at infinity with two alternative but natural approaches when dealing with sets at infinity.
Namely, the first comes from the one-point compactification and the second from the notion of the surface Minkowski content.

\subsection{One-point compactification}\label{subsec:cpt}

When dealing with unbounded sets in $\eR^N$ it is natural to turn the one-point compactification and ask if there is a connection between the fractal properties of unbounded sets in $\eR^N$ at infinity and the classical fractal properties of their compactified images.

We choose the Riemann sphere $\mathbb{S}^N$ to be the unit sphere $$
\mathbb{S}^N:=\left\{(y_1,\ldots,y_{N+1})\,:\,\sum_{i=1}^{N+1}y_i^2=1\right\}
$$ with $\eR^N$ considered as the equatorial hyper-plane $\{y_{N+1}=0\}$.
Then, the one-point compactification is realized via the stereographic projection $\Psi\colon\eR^N\to\mathbb{S}^N$ defined by
\begin{equation}\label{stereo}
\Psi(x_1,\ldots,x_N):=\left(\frac{2x_1}{|x|^2+1},\ldots,\frac{2x_N}{|x|^2+1},\frac{|x|^2-1}{|x|^2+1}\right),
\end{equation}
where $x=(x_1,\ldots,x_N)\in\eR^N$,
while we let $\Psi(\ty):=\mathrm{\mathbf{N}}=(0,\ldots,0,1)$, the north pole.
It is easy to see that $\Psi(\eR^N)=\mathbb{S}^N\setminus\{\mathbf{N}\}$ is an immersed submanifold of $\eR^{N+1}$.
Hence, if $\O\subseteq\eR^N$, then the (spherical) $N$-dimensional volume of its image $\Psi(\O)\subseteq\mathbb{S}^N$ is given by
\begin{equation}\label{SNvolumen}
|\Psi(\O)|_{\mathbb{S}}=\int_{\O}\sqrt{\det\left((D\Psi)^{\tau}D\Psi\right)}\di x=\int_{\O}\frac{2^N}{(1+|x|^2)^N}\di x
\end{equation}
and we will call $|\Psi(\O)|_{\mathbb{S}}$ the {\em spherical $N$-dimensional volume} of $\O$.

Since for every Lebesgue measurable $\O\subseteq\eR^N$ its spherical volume is finite, it makes sense to define the notions of {\em spherical $($upper, lower$)$ Minkowski content} and {\em spherical $($upper, lower$)$ Minkowski dimension} analogously as in the classical case.
Here we will use the {\em spherical} $\delta$-neighborhood for subsets of $\mathbb{S}^N$:
\begin{equation}\label{sphericalneigh}
A_{\delta,\eS}:=\{y\in\mathbb{S}^N\,:\, d_{\eS}(y,A)<\delta\},
\end{equation}
where $A\subseteq\mathbb{S}^N$, $\d>0$ and $d_{\eS}$ is the induced spherical metric on $\mathbb{S}^N$.

\begin{definition}\label{spherical_cont}
The {\em upper spherical Minkowski contents} for $A\subseteq\mathbb{S}^N$ is defined in the usual way:
\begin{equation}\label{uppersphericalM}
{\ovb{\mathcal{M}}}_{\eS}^r(A):=\limsup_{\delta\to 0^+}\frac{|A_{\delta,{\eS}}|_{\mathbb{S}}}{\delta^{N-r}},
\end{equation}
and, analogously, the {\em lower} counterpart denoting it by $\unb{\mathcal{M}}_{\eS}^r(A)$.

Furthermore, the {\em upper (lower) spherical Minkowski dimension} of a set $A\subseteq\mathbb{S}^N$ is then introduced in a standard way
\end{definition}

The just introduced notions generalize to the case of relative fractal drums $(A,\O)$ with $A$ and $\O$ being subsets of $\eS^N$ naturally by replacing $|A_{\delta,{\eS}}|_{\mathbb{S}}$ by $|A_{\delta,{\eS}}\cap\O|_{\mathbb{S}}$ in Definition~\ref{spherical_cont} above.
We are now able to state and prove a comparison result for the Minkowski content of $\O$ at infinity and its spherical Minkowski content defined just above.

\begin{theorem}\label{sferni_sadrzaji}
Let $\O\subseteq\eR^N$ be a Lebesgue measurable set of finite measure and $\mathbf{N}$ the north pole of $\eS^N$. Then, for every $r<-N$ we have
\begin{equation}\label{Gsfsd}
-\frac{N+r}{N-r}\left(\frac{2N}{N-r}\right)^{-\frac{N+r}{N-r}}2^r{\ovb{\M}}^{r}(\ty,\O)\leq{\ovb{\mathcal{M}}}_{\eS}^{\,r}(\mathbf{N},\Psi(\O))\leq 2^r{\ovb{\M}}^{r}(\ty,\O)
\end{equation}
and
\begin{equation}\label{Dsfsd}
\unb{\mathcal{M}}_{\eS}^{r}(\mathbf{N},\Psi(\O))\leq 2^r{\unb{\M}}^{r}(\ty,\O).
\end{equation}
\end{theorem}

%

\begin{proof}
First, we note that elementary trigonometry yields that for any $\delta\in(0,\pi)$ we have
\begin{equation}\label{sferna_t_okolina}	
\Psi^{-1}(\{\mathbf{N}\}_{\delta,\eS}\cap A)=B_{\cot\frac{\delta}{2}}(0)^c\cap\Psi^{-1}(A).
\end{equation}
%
%

By \eqref{SNvolumen} and \eqref{sferna_t_okolina} 
 we have for $\delta\in(0,\pi)$ that
$$
\begin{aligned}
|\{\mathbf{N}\}_{\delta,\eS}\cap\Psi(\O)|_{\eS}=&\int_{B_{\cot{\frac{\d}{2}}}(0)^c\cap\O}\frac{2^N}{(1+|x|^2)^N}\di x
\leq\frac{2^N|B_{\cot\frac{\delta}{2}}(0)^c\cap\O|}{\left(1+\cot^2\frac{\d}{2}\right)^N}.
\end{aligned}
$$
Next, we introduce a new variable $t:=\cot(\d/2)$ and observe that $\delta\to0^+$ if and only if $t\to +\ty$.
Furthermore, for $r\in\eR$ from the above inequality we have 
$$
\begin{aligned}
\frac{|\{\mathbf{N}\}_{\delta,\eS}\cap\Psi(\O)|_{\eS}}{\d^{N-r}}\leq&\frac{2^N}{\left(1+\cot^2\frac{\d}{2}\right)^N}\,\frac{|B_{\cot\frac{\delta}{2}}(0)^c\cap\O|}{\d^{N-r}}=\frac{2^N}{\left(1+t^2\right)^N}\,\frac{|B_{t}(0)^c\cap\O|}{2^{N-r}(\arccot t)^{N-r}}\\
=&2^r\frac{t^{2N}}{\left(1+t^2\right)^N}\,\frac{1}{(t\arccot t)^{N-r}}\,\frac{|B_{t}(0)^c\cap\O|}{t^{N+r}}.
\end{aligned}
$$
Now, since $t\arccot t\to 1$ when $t\to +\ty$ we obtain the second inequality in \eqref{Gsfsd} and~\eqref{Dsfsd} by taking the upper and lower limit as $\delta\to 0^+$, respectively.

To prove the first inequality in~\eqref{Gsfsd} we fix $\phi>1$ and similarly as before we have
$$
\begin{aligned}
|\{\mathbf{N}\}_{\delta,\eS}\cap&\Psi(\O)|_{\eS}=\int_{B_{\cot{\frac{\d}{2}}}(0)^c\cap\O}\frac{2^N}{(1+|x|^2)^N}\di x\\
\geq&\int_{B_{\cot{\frac{\d}{2}}}(0)^c\cap B_{\phi\cot{\frac{\d}{2}}}(0) \cap\O}\frac{2^N}{(1+|x|^2)^N}\di x
\geq&\frac{2^N|B_{t,\phi t}(0)\cap\O|}{(1+\phi^2t^2)^N},
\end{aligned}
$$
where we have again introduced the variable $t:=\cot(\d/2)$.
This implies that for $r< -N$ we have
$$
\begin{aligned}
\frac{|\{\mathbf{N}\}_{\delta,\eS}\cap\Psi(\O)|_{\eS}}{\d^{N-r}}\geq&\frac{2^r}{(1+\phi^2t^2)^N}\,\frac{|B_{t,\phi t}(0)\cap\O|}{(\arccot t)^{N-r}}\\
=&2^r\frac{t^{2N}}{(1+\phi^2t^2)^N}\,\frac{1}{(t\arccot t)^{N-r}}\,\frac{|B_{t,\phi t}(0)\cap\O|}{t^{N+r}},
\end{aligned}
$$
hence, by taking the upper limit as $\delta\to 0^+$ we obtain
\begin{equation}
{\ovb{\mathcal{M}}}_{\eS}^{r}(\mathbf{N},\Psi(\O))\geq\frac{2^r}{\phi^{2N}}\,{\ovb{\M}}_{\phi}^{r}(\ty,\O)\geq\frac{2^r(1-\phi^{N+r})}{\phi^{2N}}{\ovb{\M}}^{r}(\ty,\O),
\end{equation}
where the second inequality follows from
Proposition~\ref{prstenasta}.

The optimal constant in \eqref{Gsfsd} is now easily obtained by maximizing the real function $\phi\mapsto\phi^{-2N}(1-\phi^{N+r})$ on the interval $(1,\ty)$.
\end{proof}

%

\begin{remark}
An open question is whether the inequalities in Theorem~\ref{sferni_sadrzaji} are sharp.
One could also state a variant of Theorem~\ref{sferni_sadrzaji} in terms of the $\phi$-shell Minkowski content at infinity and valid for unbounded sets having possibly infinite Lebesgue measure but for the sake of brevity we omit it here.
\end{remark}

\subsection{Surface Minkowski Content at Infinity}\label{surface_sec}

Now we will take a closer look into the possible connection between the $\phi$-shell Minkowski content at infinity and a natural notion of {\em surface Minkowski content at infinity} introduced just below. 
Inspired by Remark~\ref{the_remark} we now introduce the following definition.

\begin{definition}\label{surf_ty}
Let $\O$ be a Lebesgue measurable subset of $\eR^N$, and denote with $\mathscr{H}^{N-1}$ the $(N-1)$-dimensional normalized Hausdorff measure (i.e., scaled by the volume of the unit $(N-1)$-ball so that it coincides with the $(N-1)$-dimensional Lebesgue measure).
Then, for $r\in\eR$, we define the {\em $r$-dimensional upper surface Minkowski content of $\O$ at infinity} as
\begin{equation}
{\ovb{\mathcal{S}}}^{r}(\ty,\O):=\limsup_{t\to +\ty}\frac{\mathscr{H}^{N-1}(S_t(0)\cap\O)}{t^{N-1+r}}
\end{equation}
where $S_t(0)$ denotes the $(N-1)$-dimensional sphere of radius $t$ with center at $0$
and, analogously, we define the lower counterpart.
\end{definition}

The next result gives a generalization of a well known fact concerning differentiability of the volume of parallel sets to the current setting; see \cite{winter}.

\begin{proposition}
\label{prop:5.5}
Let $\O$ be a Lebesgue measurable subset of $\eR^N$.
Then, for Lebesgue a.e. $t>0$ we have that\footnote{Using formula \ref{deriva} along with the coarea formula for the mapping $x\mapsto|x|$ and integration by parts, one could now give an alternative proof of Proposition \ref{integralna_veza_inf}, we omit the details.}
\begin{equation}\label{deriva}
\frac{\di}{\di t}|B_t(0)\cap\O|=\mathscr{H}^{N-1}(S_t(0)\cap\O).
\end{equation}
Furthermore, if $|\O|<\ty$ then we have that
\begin{equation}\label{derivb}
\frac{\di}{\di t}|B_t(0)^c\cap\O|=-\mathscr{H}^{N-1}(S_t(0)\cap\O).
\end{equation}
\end{proposition}

\begin{proof}
We will use~\cite[Proposition~2.10]{mink}.
In short, this result states that for a closed subset $A$ of $\eR^N$ with Lebesgue measure equal to zero and a Lebesgue measurable\footnote{The original assumption in ~\cite[Proposition~2.10]{mink} was that $\O$ is open, but by going through the proof, it is clear that it is enough to assume that $\O$ is Lebesgue measurable.} subset $\O$ of $\eR^N$ we have that
\begin{equation}\label{A_t_derivacija}
\frac{\di}{\di t}|A_t\cap\O|=\mathscr{H}^{N-1}(\partial A_t\cap\O)
\end{equation}
for a.e. $t>0$.
This proves~\eqref{deriva} if we let $A:=\{0\}$.
Furthermore, since for $\O$ of finite Lebesgue measure we have that $|B_t(0)^c\cap\O|=|\O|-|B_t(0)\cap\O|$,~\eqref{deriva} implies~\eqref{derivb} in this case. 
\end{proof}

An immediate consequence is the following relation between the surface measure and the volume of the $\phi$-shell when $\phi$ shrinks to 1.

\begin{proposition}
Let $\O$ be a Lebesgue measurable subset of $\eR^N$.
Then, we have
\begin{equation}
\lim_{\phi\to 1^+}\frac{|B_{t,\phi t}(0)\cap\O|}{\log\phi}=t\,\mathscr{H}^{N-1}(S_t(0)\cap\O)
\end{equation}
for Lebesgue a.e. $t>0$.
\end{proposition}

\begin{proof}
Since $|B_{t,\phi t}(0)\cap\O|=|B_{\phi t}(0)\cap\O|-|B_{t}(0)\cap\O|$ and by letting $h=\log\phi$ we have
\begin{equation}\label{lim_lim}
\lim_{\phi\to 1^+}\frac{|B_{t,\phi t}(0)\cap\O|}{\log\phi}=\lim_{h\to 0^+}\frac{|B_{\E^{h}t}(0)\cap\O|-|B_{t}(0)\cap\O|}{h}=f'(\log t),
\end{equation}
where we have let
$
f(\tau):=|B_{\E^{\tau}}(0)\cap\O|.
$
Finally, by Proposition \ref{prop:5.5} we have that
$
f'(\tau)=\E^{\tau}\,\mathscr{H}^{N-1}(S_{\E^{\tau}}(0)\cap\O)
$ for a.e. $\tau\in\eR$ which completes the proof.
\end{proof}

In light of this, if we could justify the interchange of the order of taking the limit as $\phi\to 1^+$ and the upper limit  as $t\to +\ty$ for $(\ty,\O)$, we would get that
\begin{equation}\label{eq:surf-non}
{\ovb{\mathcal{S}}}^{r}(\ty,\O)=\limsup_{t\to +\ty}\frac{\mathscr{H}^{N-1}(S_t(0)\cap\O)}{t^{N-1+r}}=\lim_{\phi\to 1^+}\frac{{\ovb{\M}}^{r}_{\phi}(\ty,\O)}{\log\phi},
\end{equation}
and an analogous equality for the lower surface Minkowski content of $(\ty,\O)$.
Of course, the interchange above is not justified and the conditions when it can be made need to be investigated in future work.
Also, note that, a priori, the limit $\lim_{\phi\to 1^+}{{\ovb{\M}}^{r}_{\phi}(\ty,\O)}/{\log\phi}$ does not have to even exist.

To demonstrate a trivial situation when \eqref{eq:surf-non} holds, we revisit Example~\ref{er_N} where we have obtained the $\phi$-shell Minkowski content of $(\ty,\eR^N)$ and provide another simple example dealing with an unbounded strip of finite width. 

\begin{example}\label{diskusija}
Recall from Example \ref{er_N} that $\M_{\phi}^0(\ty,\eR^N)={\pi^{\frac{N}{2}}(\phi^N-1)}/{\Gamma\left(\frac{N}{2}+1\right)}$ and note that in this example we have
$
\lim_{\phi\to 1^+}\frac{\M_{\phi}^0(\ty,\eR^N)}{\log\phi}=\frac{N\pi^{\frac{N}{2}}}{\Gamma\left(\frac{N}{2}+1\right)}.
$
On the other hand, for the $0$-dimensional surface Minkowski content of $(\ty,\eR^N)$ we have
$
\mathcal{S}^0(\ty,\eR^N)=\lim_{t\to +\ty}\frac{\mathscr{H}^{N-1}(S_t(0))}{t^{N-1}}=\frac{N\pi^{\frac{N}{2}}}{\Gamma\left(\frac{N}{2}+1\right)}
$
so that \eqref{eq:surf-non} holds in this case.
\end{example}

\begin{example}\label{ex:strip}
Let $\O$ be a horizontal strip of finite height, i.e., let $h>0$ and
$$
\O:=\left\{(x,y)\in\eR^2\,:\,0<y<h\right\}.
$$
Then, for any $\phi>1$ and $t>h$ it is clear that we have
$$
2h(\sqrt{\phi^2t^2-h^2}-t)\leq|B_{t,\phi t}(0)\cap\O|\leq 2h(\phi t-\sqrt{t^2-h^2}),
$$
which implies that ${{\dim}}_B^{\phi}(\ty,\O)=-1$ and $\M_{\phi}^{-1}(\ty,\O)=2h(\phi-1)$.

%

Next, observe that $\mathscr{H}^{1}(S_t(0)\cap\O)=\sqrt{1+t^2}\arcsin(h/t)$, hence 
$$
\mathcal S^{-1}(\ty,\O)=\lim_{t\to +\ty}\frac{H^{1}(S_t(0)\cap\O)}{t^{2-1-1}}=2h=\lim_{\phi\to 1^+}\frac{\M_{\phi}^{-1}(\ty,\O)}{\log\phi}
$$
so that \eqref{eq:surf-non} holds.
\end{example}

As a final comment, we point out that one would like to establish analogous relations between the relative Minkowski content and the corresponding relative surface Minkowski content, both for classical RFDs $(A,\O)$ and for $(\infty,\Omega)$ as was done in~\cite{winter} for the non-relative case.
An obstacle that one needs to overcome to achieve this is the fact that for a relative fractal drum $(A,\O)$, its relative tube function $t\mapsto |A_t\cap\O|$ is not a Kneser function\footnote{Recall that a function $f\colon(0,\ty)\to(0,\ty)$ is called a {\em Kneser function of order $r\geq 1$}, if for all $0<a\leq b<\ty$ and $\phi\geq 1$ we have
$
f(\phi b)-f(\phi a)\leq \phi^r(f(b)-f(a)).
$} of order $N$, in general; see \cite[Example 4.111]{ra}.
The crucial fact that enabled to prove that a bounded set $A\subseteq\eR^N$ is Minkowski nondegenerate if and only if it is surface Minkowski nondegenerate in~\cite{winter} was that for a such a set its tube function $t\mapsto |A_t|$ is a Kneser function of order $N$ (see~\cite{Kne}).

\end{document}